\documentclass[twoside, 12pt]{article}
\usepackage{mathrsfs}
\usepackage{amssymb, amsmath, mathrsfs, amsthm}
\usepackage{graphicx}
\usepackage{color}
\usepackage{tikz}
\usepackage[top=2cm, bottom=2cm, left=2cm, right=2cm]{geometry}
\usepackage{float, caption, subcaption}
    \usepackage{diagbox}
\usepackage{algorithm}
\usepackage{algpseudocode}
\usepackage{amsmath}
\usepackage[colorlinks,
  linkcolor=blue,
 anchorcolor=blue,
 citecolor=blue]{hyperref} 
\input{epsf}

\newcommand{\bd}{\begin{description}}
\newcommand{\ed}{\end{description}}
\newcommand{\bi}{\begin{itemize}}
\newcommand{\ei}{\end{itemize}}
\newcommand{\be}{\begin{enumerate}}
\newcommand{\ee}{\end{enumerate}}
\newcommand{\beq}{\begin{equation}}
\newcommand{\eeq}{\end{equation}}
\newcommand{\beqs}{\begin{eqnarray*}}
\newcommand{\eeqs}{\end{eqnarray*}}

\definecolor{DarkGreen}{rgb}{0.2, 0.6, 0.3}

\usepackage[utf8]{inputenc}
\usepackage[T1]{fontenc}

\usepackage{multicol}
\usepackage{amsthm}
\usepackage{multirow}
\usepackage{makecell}

\newtheorem{theorem}{Theorem}[section]

\newtheorem{lemma}{Lemma}[section]
\newtheorem{definition}{Definition}[section]
\newtheorem{corollary}[theorem]{Corollary}
\newtheorem{case}{Case}

\newtheorem{proposition}{Proposition}[section]

\usepackage{amsmath,amssymb,amsthm}
\usepackage{cleveref} 
\theoremstyle{plain}

\theoremstyle{remark}

\begin{document}

\title{\textbf{Diagonal Ramsey numbers for wheels} \footnote{Supported by the National
Science Foundation of China (No.12061059)}
}

\author{
Maoxuan Li\footnote{School of Mathematics and Statistis, Qinghai Normal University, Xining, Qinghai 810008, China. {\tt 13956107301@163.com}} \quad
Masaki Kashima\footnote{Faculty of Science and Technology, Keio University, Kanagawa, Japan. {\tt masaki.kashima10@gmail.com}} {}\footnote{Supported by JSPS KAKENHI Grant number: JP25KJ2077} \quad
Yaping Mao \footnote{Academy of Plateau
Science and Sustainability, and School of Mathematics and Statistics, Qinghai Normal University, Xining, Qinghai 810008, China. {\tt yapingmao@outlook.com; myp@qhnu.edu.cn}}}
\date{}
\maketitle

\begin{abstract}
 The Ramsey number $\mathrm{R}(G_1,G_2)$ is the smallest integer $N$ such that any red-blue coloring of the edges of the complete graph $K_N$ contains either a red copy of $G_1$ or a blue copy of $G_2$. 
 In 2022, the third author and others gave lower and upper bounds of the Ramsey number $\mathrm{R}(W_n,W_n)$, where $W_n$ is the wheel graph with $n$ vertices.
 In this paper, we improve their bounds by showing that $3n-2\leq \mathrm{R}(W_n,W_n)\leq 6n-6$ for even $n\geq 8$ and $2n\leq \mathrm{R}(W_n,W_n)\leq \frac{9n-7}{2}$ for odd $n\geq 7$.
 Furthermore, we give recursive bounds for the $k$-colored Ramsey number for $W_n$.
\\[2mm]
{\bf Keywords:}  Ramsey number; Wheel graph; Extremal Combinatorics;\\[2mm]
{\bf AMS subject classification 2020:} 05C55; 05C38; 05C15; 05C80
\end{abstract}

\section{Introduction}

All graphs considered in this paper are finite simple graphs without loops. 
For a positive integer $n$, $K_n$ denotes the complete graph with $n$ vertices, and $C_n$ denotes the cycle of length $n$.
Other basic notation not defined in this paper is based on Bondy and Murty~\cite{bondy2008graph}.

Classical Ramsey theory was first introduced by Frank Plumpton Ramsey, a British mathematician. In his posthumous 1930 paper `On a Problem of Formal Logic', he proposed Ramsey’s Theorem as an auxiliary tool to solve a problem in formal logic. 
Though it was not originally designed for combinatorial research, this theorem later became the cornerstone of classical Ramsey theory. 
Formally, Ramsey’s Theorem states that for any given positive integer, there exists a minimum positive integer, called the Ramsey number, such that if the subsets of a set with size not less than this minimal integer are partitioned into several classes, at least one class will contain all subsets of a smaller specified substructure. 

In the setting of graph theory, the Ramsey number for graphs are defined as follows.

\begin{definition}
    For two graphs $G_1$ and $G_2$, we write $F\to (G_1,G_2)$ if for any two coloring of edges of $F$ in red and blue, either a red copy of $G_1$ or a blue copy of $G_2$ exists.
    The Ramsey number for $G_1$ and $G_2$, denoted by $\mathrm{R}(G_1,G_2)$, is defined as
    \[
    \mathrm{R}(G_1,G_2)=\min \{N\mid K_N\to (G_1,G_2)\}.
    \]
\end{definition}

It is well known that $\mathrm{R}(K_3,K_3)=6$, and the Ramsey number $\mathrm{R}(K_n,K_n)$ for general $n$ has been a main topic in graph Ramsey theory for a long time.
In this paper, we focus on the Ramsey number for a wheel graph.
For a positive integer $n\geq 4$, the \emph{wheel graph} $W_n$ is a graph obtained from the cycle $C_{n-1}$ of length $n-1$ by adding a new vertex adjacent to every vertex of $C_{n-1}$.

The study of the Ramsey numbers involving wheel graphs has a rich developmental history spanning several decades. Early contributions include Burr and Erd\H{o}s~\cite{burr1983generalizations}, who in 1983 established the exact value $\mathrm{R}(K_3, W_n) = 2n - 1$ for all $n \geq 6$ as part of their generalizations of Chv\'{a}tal's Ramsey-theoretic results. Prior to that, Harborth and Mengersen~\cite{harborth1988all} had already proven in 1988/89 that $\mathrm{R}(W_5, W_5) = 15$, a value which Hendry later listed without proof in his 1989 table of the Ramsey numbers for five-vertex graphs \cite{hendry1989ramsey}. In 1993, Faudree and McKay~\cite{faudree1993conjecture} made progress on specific wheel-wheel Ramsey numbers by characterizing all critical colorings (of types 2, 1, and 2) for $\mathrm{R}(W_n, W_6)$ when $n = 4, 5, 6$, advancing the understanding of structural properties of critical graphs in this context. Building on Burr and Erd\H{o}s' work, Radziszowski and Xia~\cite{radziszowski1994paths} extended the analysis in 1994 by determining all critical colorings for $\mathrm{R}(K_3, W_n)$ for every $n \geq 3$ in their study of paths, cycles, and wheels in graphs without antitriangles. A notable milestone in this area is the refutation of an Erd\H{o}s conjecture (labeled Erd\H{o}s in Graham, Rothschild, and Spencer's 1990 monograph \cite{graham1991ramsey}), which posited that $\mathrm{R}(G, G) \geq \mathrm{R}(K_{\chi(G)}, K_{\chi(G)})$ for any graph $G$; this conjecture was disproven by the observation that $\mathrm{R}(W_6, W_6) = 17$, while $\mathrm{R}(K_4, K_4) = 18$ and $\chi(W_6) = 4$, making $W_6$ a counterexample. 
Other results on the Ramsey number involving wheel graphs are on \cite{chen2005ramsey, salman2007ramsey, shi2010ramsey, zhang2010ramsey}.
Recently, Mao, Wang, Magnant, and Schiermeyer \cite{mao2022ramsey} provided general lower and upper bounds for diagonal wheel Ramsey numbers in 2022.

\begin{theorem}[\cite{mao2022ramsey}]\label{thm:previous_bound}
    For any integer $n \geq 7$,
    \begin{equation*}
        \begin{cases}
            3n-3 \leq \mathrm{R}(W_n,W_n)\leq 8n-10 \quad &\text{if } n \text{ is even; and}\\
            2n-2 \leq \mathrm{R}(W_n,W_n)\leq 6n-8 \quad &\text{if } n \text{ is odd.}\\
        \end{cases}
    \end{equation*}
\end{theorem}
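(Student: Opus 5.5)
Theorem~\ref{thm:previous_bound} has two parts: lower bounds from explicit colorings, and upper bounds from a counting argument around a vertex of large monochromatic degree.

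\textbf{Lower bounds.} For even $n$, the rim $C_{n-1}$ is an odd cycle, so $W_n$ has chromatic number $4$. Take $K_{3n-4}$ split into three cliques of size $n-1$, colour every edge inside a clique red and every edge between cliques blue. A red component has only $n-1<n$ vertices, so there is no red $W_n$. The blue graph is tripartite, hence $3$-colourable, so it contains no $W_n$. Therefore $\mathrm{R}(W_n,W_n)\ge 3n-3$.

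For odd $n$, $W_n$ is $3$-chromatic and connected. Take $K_{2n-3}$ split into two red cliques of sizes $n-1$ and $n-2$, with all edges between them blue. The red graph has no component on $n$ vertices. The blue graph is bipartite, and $W_n$ contains triangles, so there is no blue $W_n$. This gives $\mathrm{R}(W_n,W_n)\ge 2n-2$.

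\textbf{Upper bounds.} Let $N=8n-10$ (respectively $6n-8$) and fix a red/blue coloring of $K_N$. Some vertex $v$ has monochromatic degree at least $\lceil (N-1)/2\rceil$, say in red, with red neighbourhood $A$. If the red graph induced on $A$ contains a cycle $C_{n-1}$, then $v$ together with that cycle is a red $W_n$. Otherwise the red graph on $A$ has no $C_{n-1}$.

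I would then use known cycle results: bounds on $\mathrm{R}(C_{n-1},W_n)$, or a Bondy-type pancyclicity or circumference lemma. Either the blue graph on $A$ contains a $W_n$, or $A$ is too small. Concretely, I expect to apply $\mathrm{R}(C_m,W_{m+1})\le 4m$-type bounds for even $m$ (which occurs when $n$ is odd) and roughly $3m$-type bounds for odd $m$. The parity difference comes from whether $C_{n-1}$ is bipartite.

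\textbf{Main obstacle.} Since no such cycle–wheel Ramsey bound is stated earlier in the paper, the hardest step is proving it directly. I would try a degree argument inside $A$. Because $|A|\ge 4n-5$ (respectively $3n-4$), some $u\in A$ has blue degree inside $A$ above roughly $|A|/2$. I would then look for a blue $C_{n-1}$ in that blue neighbourhood using a Dirac-type condition, or else find a red $C_{n-1}$ via the complementary degree condition. Bookkeeping the constants $8n-10$ and $6n-8$ is where the parity-dependent slack is spent.
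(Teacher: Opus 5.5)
Your lower bounds are correct. Three cliques of order $n-1$ have $3n-3$ vertices, not $3n-4$. That is exactly the coloring of Proposition~\ref{prop:even_wheel_lower_bound} with the colors swapped, and it gives the stronger $3n-2$. For odd $n$, your two-clique coloring suffices for $2n-2$, whereas the paper's five-part coloring gives $2n$. The paper does not reprove this cited theorem. It follows from the paper's own stronger bounds, since $6n-6\le 8n-10$ and $\frac{9n-7}{2}\le 6n-8$ for $n\ge 3$.

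\textbf{The upper bound is not proved.} Your reduction $\mathrm{R}(W_n,W_n)\le 2\,\mathrm{R}(C_{n-1},W_n)$ is valid. It needs $\mathrm{R}(C_{n-1},W_n)\le 4n-5$ for even $n$ and $\le 3n-4$ for odd $n$. With $m=n-1$, that is a $4m$-type bound for \emph{odd} $m$ and a $3m$-type bound for \emph{even} $m$. This is the reverse of what you wrote: a $4m$-type bound for even $m$ does not fit into $|A|\ge 3n-4=3m-1$.

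More seriously, you prove no such bound, and the degree sketch breaks in three places:
\begin{enumerate}
\item That some $u$ has blue degree about $|A|/2$ is not automatic; it is only one side of a dichotomy.
\item Inside $N_{G_b}(u)\cap A$ you have no minimum-degree control, so Dirac cannot produce a blue $C_{n-1}$ there.
\item In the complementary case, Dirac only gives a Hamilton cycle of $G_r[A]$, of length $|A|$, not a cycle of length exactly $n-1$.
\end{enumerate}

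\textbf{The missing idea} is to apply the diagonal cycle Ramsey number inside that blue neighbourhood. Suppose $|N_{G_b}(u)\cap A|\ge \mathrm{R}(C_{n-1},C_{n-1})$ for some $u\in A$. Then a monochromatic $C_{n-1}$ there completes a wheel in \emph{either} color: red with hub $v$, blue with hub $u$. So you never have to force a particular color. Otherwise $\delta(G_r[A])\ge |A|-\mathrm{R}(C_{n-1},C_{n-1})$. By Theorem~\ref{thm:cycle_ramsey} this is $|A|-2n+3$ (even $n$), resp.\ $|A|-\frac{3n-5}{2}$ (odd $n$). Both are at least $|A|/2$ when $|A|\ge 4n-5$, resp.\ $3n-4$. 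Now use Dirac together with Bondy's theorem: a Hamiltonian graph on $p$ vertices with at least $p^2/4$ edges is pancyclic or is $K_{p/2,p/2}$. In the pancyclic case you get a red $C_{n-1}$ in $A$. In the exceptional case each side is a blue clique of order at least $|A|/2\ge n$. This already gives $8n-10$ and $6n-8$.

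The paper's proof of Proposition~\ref{prop:two_colors_upper_bound} uses the same first step. It replaces Dirac and Bondy by Theorem~\ref{thm:weakly_pancyclic}, which only needs $\delta\ge\frac{|A|+2}{3}$, together with Lemma~\ref{lem:circumference} and the same bipartite/blue-clique exception. That is what lowers the bounds to $6n-6$ and $\frac{9n-7}{2}$.
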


In this paper, we improve both lower and upper bounds in Theorem~\ref{thm:previous_bound} as follows.

\begin{theorem}\label{thm:2_ramsey}
    For any integers $n\geq 7$,
    \begin{equation*}
        \begin{cases}
            3n-2 \leq \mathrm{R}(W_n,W_n)\leq 6n-6 \quad &\text{if } n \text{ is even; and}\\
            2n \leq \mathrm{R}(W_n,W_n)\leq \frac{9n-7}{2} \quad &\text{if } n \text{ is odd.}\\
        \end{cases}
    \end{equation*}
\end{theorem}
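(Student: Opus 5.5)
Write $W_n=K_1+C_{n-1}$ with hub $v$ and rim $C_{n-1}$. Both lower and upper bounds will use this decomposition, and the parity of $n-1$ decides which cycle Ramsey results apply.

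\emph{Lower bounds.} For even $n$ the rim $C_{n-1}$ is an odd cycle, so $\chi(W_n)=4$. I take four disjoint cliques $A_1,\dots,A_4$ of sizes chosen so that the total is $3n-3$. Edges inside each $A_i$ are red, edges between $A_1\cup A_2$ and $A_3\cup A_4$ are blue, and the $A_1$--$A_2$ and $A_3$--$A_4$ edges get a colour chosen so the construction stays balanced. The red and blue graphs should then each be a disjoint union of pieces that are either too small to host $W_n$ or bipartite-like with a small odd part. The key point is that a monochromatic $W_n$ with odd rim needs both a vertex of high degree in its colour and an odd cycle of length $n-1$ inside that vertex's neighbourhood. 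I would tune the part sizes, for instance three parts of size $n-1$ with one vertex removed or a $2$-$2$ type pattern of sizes $\lceil (3n-3)/4\rceil$, so that every monochromatic neighbourhood is either bipartite or has fewer than $n-1$ vertices. This gains one vertex over the $3n-4$-vertex colouring behind Theorem~\ref{thm:previous_bound}.

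For odd $n$ the rim is even, and I would use two red cliques of size $n-1$ joined completely in blue, giving $2n-2$ vertices. Red has no $K_n$-sized component, and in blue any hub $v\in A_1$ sees only $A_2$, which is independent in blue. To push the bound up to $2n-1$ vertices I would add one extra vertex $x$ and split its edges so that it creates neither a red $W_n$ nor a blue wheel. Blue wheels are bipartite-neighbourhood obstructed, so I only need to control red neighbourhoods of size at least $n-1$ containing a $C_{n-1}$. I would check this carefully, and possibly perturb a few edges, since I expect this step to be delicate.

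\emph{Upper bounds.} Take a colouring of $K_N$ and a vertex $v$ of maximum monochromatic degree. Then $v$ has, say, red degree at least $\lceil (N-1)/2\rceil$, with red neighbourhood $U$. If $U$ contains a red $C_{n-1}$ we are done. Otherwise the colouring restricted to $U$ has no red $C_{n-1}$, and by the known cycle Ramsey numbers ($R(C_m,C_m)=2m-1$ for even $m$, $3m/2$-type bounds, and $R(C_m,C_m)=2m-1$ for odd $m$), $U$ contains a blue $C_{n-1}$, or in fact far more blue structure. We then look for a blue hub: we need a vertex $w$ whose blue neighbourhood contains a blue $C_{n-1}$. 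Since red $C_{n-1}$ is absent in $U$, and also in the red neighbourhood of every vertex, each vertex's red neighbourhood is sparse in red. This forces blue degrees inside $U$ to be large, by Bondy/Erd\H{o}s--Gallai-type extremal numbers for cycles: $\mathrm{ex}(m,C_{n-1})$ is about $m^2/4$ for odd $n-1$, and for even $n-1$ it is linear in $m$ via Erd\H{o}s--Gallai for paths and cycles.

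For even $n$ (odd rim), red $C_{n-1}$-freeness in $U$ with $|U|\ge 3n-3$ forces a near-bipartite red structure. I would use a stability result for odd cycles, Bondy or Brandt: a non-bipartite graph on $m$ vertices with minimum degree above $2m/5$ is pancyclic-ish. This lets me find a blue vertex of blue degree at least $n-1$ whose blue neighbourhood is dense enough to contain $C_{n-1}$, and that count gives $N=6n-6$. For odd $n$ (even rim), Erd\H{o}s--Gallai makes red $C_{n-1}$-free sets have linear edge count, so blue is almost complete inside $U$ and the count yields $(9n-7)/2$.

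The main obstacle is the upper-bound step of locating a blue hub whose blue neighbourhood contains a blue $C_{n-1}$. I would attack it by taking $w\in U$ of maximum blue degree within $U$ and applying Dirac/Bondy pancyclicity to the blue graph on $N_B(w)\cap U$, using that its complement (the red graph) is $C_{n-1}$-free and hence has bounded degree structure.
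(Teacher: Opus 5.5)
Your proposal has genuine gaps in both the upper bound and the two lower-bound constructions.

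\emph{Upper bound.} Your sketch never isolates the step that closes the argument, and the tools you name do not reach the stated constants. The missing idea is a per-vertex dichotomy inside $U$. If some $w\in U$ has $|N_{G_b}(w)\cap U|\ge \mathrm{R}(C_{n-1},C_{n-1})$, then that set contains either a red $C_{n-1}$ (a red wheel with hub $v$) or a blue $C_{n-1}$ (a blue wheel with hub $w$). So you may assume $\delta(G_r[U])\ge |U|-\mathrm{R}(C_{n-1},C_{n-1})$. Here $\mathrm{R}(C_{n-1},C_{n-1})=2n-3$ for even $n$ and $\frac{3n-5}{2}$ for odd $n$; note that $\mathrm{R}(C_m,C_m)=\frac{3m}{2}-1$, not $2m-1$, for even $m\ge 6$. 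With these values the bound is at least $\frac{|U|+2}{3}$ once $|U|\ge 3n-3$, resp.\ $|U|\ge\frac{9n-9}{4}$. The Brandt--Faudree--Goddard theorem (non-bipartite with $\delta\ge\frac{m+2}{3}$ implies weakly pancyclic of girth at most $4$) plus a circumference bound $\frac{|U|-1}{2}\ge n-1$ then gives a red $C_{n-1}$ in $U$. If instead $G_r[U]$ is bipartite, one side is a blue clique of order at least $n$. So pancyclicity must be applied to the \emph{red} graph on all of $U$, in the case where every blue degree is small, not to the blue graph on $N_{G_b}(w)\cap U$.

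Your substitutes fall short quantitatively:
\begin{itemize}
\item For even $n$, $\mathrm{ex}(m,C_{n-1})=\lfloor m^2/4\rfloor$ only gives average blue degree about $m/2$. This reaches $2n-3$ only for $m$ around $4n$, which essentially recovers the old $8n-10$ bound.
\item A $\frac{2m}{5}$ minimum-degree threshold needs $m-(2n-3)>\frac{2m}{5}$, i.e.\ $m>\frac{10n-15}{3}$, which fails at $m=3n-3$.
\item For odd $n$, the claim that red $C_{n-1}$-freeness forces linearly many red edges and an almost complete blue graph on $U$ is false when $|U|=\Theta(n)$. Red disjoint cliques of order $n-2$ are $C_{n-1}$-free yet contain a constant fraction of all pairs. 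Erd\H{o}s--Gallai bounds graphs of small circumference, not $C_{n-1}$-free graphs, and it is quadratic at this scale anyway.
\item Red $C_{n-1}$-freeness also gives no ``bounded degree structure''.
\end{itemize}

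\emph{Lower bounds.} Both constructions are left unspecified, and the concrete choices you float fail.

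For even $n$, the balanced $2$-$2$ pattern with parts of order about $\frac{3n-3}{4}$ always contains a monochromatic $W_n$. A red join on $A_1$--$A_2$ (or $A_3$--$A_4$) creates a red clique of order at least $n$. Blue on both pairs makes the blue graph complete $4$-partite, and each blue neighbourhood $K_{a,a,a}$ with $3a\ge n-1$ contains the odd cycle $C_{n-1}$. Three cliques of order $n-1$ ``with one vertex removed'' has only $3n-4$ vertices. What works, and what the paper uses, is three cliques of order exactly $n-1$ in one colour with $K_{n-1,n-1,n-1}$ in the other. Neighbourhoods in the clique colour have $n-2$ vertices, and neighbourhoods in the other colour are $K_{n-1,n-1}$, which has no odd cycle.

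For odd $n$, adding one vertex $x$ to red $2K_{n-1}$ and blue $K_{n-1,n-1}$ cannot work:
\begin{itemize}
\item If $x$ had three red neighbours in one clique, each of them would be the hub of a red $W_n$: its red neighbourhood is a red $K_{n-2}$ plus $x$, which has two red neighbours there.
\item Hence $x$ has at least $n-3\ge\frac{n-1}{2}$ blue neighbours in each clique.
\item The blue $K_{n-3,n-3}$ in its neighbourhood then contains $C_{n-1}$, giving a blue $W_n$ centred at $x$.
\end{itemize}
The paper's colouring of $K_{2n-1}$ is structurally different. It is a blow-up of the pentagon/pentagon colouring of $K_5$, with classes $A,B,C,D$ of order $\frac{n-1}{2}$ (blue inside $A,C$, red inside $B,D$) and a singleton $v_0$. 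For hubs $v_0$, $A$ and $C$, the red neighbourhood is a disjoint union of red cliques of order at most $\frac{n-1}{2}$ plus isolated vertices. For hubs in $B$ or $D$, it has $n-1$ vertices containing a red-independent set of size $\frac{n+1}{2}$, so it is not Hamiltonian. Colour symmetry of the construction then excludes blue wheels as well.
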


As a natural generalization of the Ramsey number for two colors, the $k$-colored Ramsey number for is defined as follows.

\begin{definition}
    For given graphs $G_1, \ldots, G_k$ and a graph $F$, we write $F \to\left(G_1, \ldots, G_k\right)$ if, no matter how one colors the edges of $F$ with $k$ colors $1, \ldots, k$, there exists a monochromatic copy of $G_i$ of color $i$ in $F$, for some $1 \leq i \leq k$. 
    The $k$-colored Ramsey number $\mathrm{R}\left(G_1, \ldots, G_k\right)$ is defined as
    \[
    \mathrm{R}\left(G_1, \ldots, G_k\right)=\min \left\{N\mid  K_N \to\left(G_1, \ldots, G_k\right)\right\}.
    \]
    In particular, when every graph in $G_1, \ldots ,G_k$ is isomorphic to a graph $G$, then we write $\mathrm{R}_k(G)$ instead of $\mathrm{R}(G_1,\ldots ,G_k)$.
\end{definition}

Note that $\mathrm{R}_1(G)=|V(G)|$ for any graph $G$.
When $G_1=\cdots =G_k=P_3$, where $P_3$ is a path with three vertices, $F \to\left(G_1, \ldots, G_k\right)$ if and only if $F$ has a proper edge coloring with $k$ colors.
Thus, we have the exact value $\mathrm{R}_k(P_3)=k$ in this case.
On the other hand, when it comes to other graph $G$, it is difficult to determine the value of $\mathrm{R}_k(G)$.
Indeed, even for the case of $G=K_3$, the asymptotic behavior of $\mathrm{R}_k(K_3)$ is unknown, while it was shown by Greenwood and Gleason~\cite{greenwood1955combinatorial} that $\mathrm{R}_3(K_3)=17$ and $42\leq \mathrm{R}_4(K_3)\leq 66$.

In this paper, we show recursive relations on the $k$-colored Ramsey number $\mathrm{R}_k(W_n)$ for wheels that give lower and upper bounds of $\mathrm{R}_k(W_n)$.
Let $K_4^-$ denote the graph obtained from $K_4$ by deleting one edge.

\begin{theorem}\label{thm:k_ramsey_lower}
    Let $k\geq 2$ and $n\geq 2$ be integers.
    Then
    \[
    \mathrm{R}_k(W_n)-1\geq \max\{(\mathrm{R}_{k-\ell}(K_4^-)-1)\cdot (\mathrm{R}_{\ell}(W_n)-1)\mid 1\leq \ell\leq k-1\}
    \]
    for even $n$, and 
    \[
    \mathrm{R}_k(W_n)-1\geq \max\{(\mathrm{R}_{k-\ell}(K_3)-1)\cdot (\mathrm{R}_{\ell}(W_n)-1)\mid 1\leq \ell\leq k-1\}
    \]
    for odd $n$.
\end{theorem}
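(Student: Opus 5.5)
This is a standard blow-up (product) construction. Fix $\ell$ with $1\le \ell\le k-1$. Set $a=\mathrm{R}_{k-\ell}(H)-1$, where $H=K_4^-$ for even $n$ and $H=K_3$ for odd $n$, and set $b=\mathrm{R}_\ell(W_n)-1$.

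By definition there is an edge coloring $\varphi_1$ of $K_a$ with colors $\ell+1,\dots,k$ containing no monochromatic copy of $H$. There is also an edge coloring $\varphi_2$ of $K_b$ with colors $1,\dots,\ell$ containing no monochromatic $W_n$. Let the vertex set of $K_{ab}$ be $V_1\cup\cdots\cup V_a$ with $|V_i|=b$, where each $V_i$ is identified with $V(K_b)$. Color edges inside each $V_i$ according to $\varphi_2$, and color every edge between $V_i$ and $V_j$ ($i\ne j$) with $\varphi_1(ij)$. It then suffices to show this coloring of $K_{ab}$ has no monochromatic $W_n$, which gives $\mathrm{R}_k(W_n)-1\ge ab$. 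Taking the maximum over $\ell$ finishes the proof.

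Colors $1,\dots,\ell$ appear only inside the parts $V_i$. A monochromatic $W_n$ in such a color is connected, so it lies inside a single $V_i$, where it would be a monochromatic $W_n$ under $\varphi_2$, which is impossible.

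Now suppose there is a monochromatic $W_n$ in a color $c\ge \ell+1$. Color $c$ edges only join distinct parts, so the map sending each vertex to the index of its part is a homomorphism from $W_n$ to the color-$c$ graph $G_c$ of $\varphi_1$ on $[a]$. This step is the crux, and it requires a structural fact about homomorphic images of wheels.
\begin{itemize}
\item For odd $n$, the rim $C_{n-1}$ is even, but the hub and any rim edge form a triangle. A homomorphism of graphs maps triangles to triangles, so $G_c$ contains a triangle $K_3$, a contradiction.
\item For even $n$, the rim $C_{n-1}$ is an odd cycle. Its image in the neighborhood of the hub's image $h$ is a closed walk of odd length in $N_{G_c}(h)$. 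An odd closed walk contains an odd cycle, so $N_{G_c}(h)$ is not bipartite. In particular $N_{G_c}(h)$ contains a path $xyz$, since an edgeless or matching graph is bipartite. Together with $h$, the vertices $h,x,y,z$ form a $K_4^-$ in $G_c$, a contradiction. (Indeed a triangle in $N_{G_c}(h)$ would even give $K_4$, but $K_4^-$ is all we need.)
\end{itemize}

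The only genuinely substantive step is this homomorphism argument for even $n$: noting that the odd rim forces the hub's neighborhood to be non-bipartite, hence to contain a $P_3$, which yields $K_4^-$. The rest is bookkeeping. One should also note edge cases such as $a=0$ or $b=0$, where the inequality is trivial. The hypothesis $n\ge 2$ is formal, since wheels need $n\ge4$.
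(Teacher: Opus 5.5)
Your proposal is correct and uses the same blow-up construction as the paper. Your even-$n$ step projects the odd rim down to an odd closed walk in the hub's neighbourhood of the reduced coloring; this is the contrapositive of the paper's observation that $G_c[N_{G_c}(v)]$ is a blow-up of a matching and hence bipartite, and your indexing (forbidding $K_4^-$ or $K_3$ in colors $\ell+1,\dots,k$) matches the statement directly, whereas the paper's proof swaps $\ell$ and $k-\ell$, which is harmless since the maximum is symmetric.
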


By Theorems~\ref{thm:2_ramsey} and \ref{thm:k_ramsey_lower}, we have the following lower bound of $\mathrm{R}_k(W_n)$.

\begin{corollary}\label{cor:k_ramsey_lower}
    For positive integers $k\geq 2$ and $n\geq 4$,
    \[
    \begin{cases}
        \mathrm{R}_k(W_n)\geq 3^{k-1}(n-1)+1\quad &\text{if } n \text{ is even; and}\\
        \mathrm{R}_k(W_n)\geq 2^{k-2}(2n-1)+1\quad &\text{if } n \text{ is odd.}
    \end{cases}
    \]
\end{corollary}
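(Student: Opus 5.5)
The plan is to induct on $k$, applying Theorem~\ref{thm:k_ramsey_lower} with $\ell=k-1$ and using $\mathrm{R}_1(G)=|V(G)|$, so that $\mathrm{R}_1(K_4^-)=4$ and $\mathrm{R}_1(K_3)=3$. With this choice of $\ell$, Theorem~\ref{thm:k_ramsey_lower} gives, for every $k\geq 2$,
\[
\mathrm{R}_k(W_n)-1\geq 3\bigl(\mathrm{R}_{k-1}(W_n)-1\bigr)\ \text{ for even } n, \qquad \mathrm{R}_k(W_n)-1\geq 2\bigl(\mathrm{R}_{k-1}(W_n)-1\bigr)\ \text{ for odd } n.
\]
Iterating these recursions reduces the corollary to a suitable base case in each parity.

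For even $n$, take $k=1$ as the base. There $\mathrm{R}_1(W_n)-1=n-1=3^{0}(n-1)$. Applying the recursion $k-1$ times gives $\mathrm{R}_k(W_n)-1\geq 3^{k-1}(n-1)$, which is the claim. This holds for all even $n\geq 4$ and does not need Theorem~\ref{thm:2_ramsey}.

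For odd $n$, starting from $k=1$ only yields $2^{k-1}(n-1)+1$. This is weaker than the claim, since $2^{k-1}(n-1)=2^{k-2}(2n-2)<2^{k-2}(2n-1)$. So the base must be $k=2$, where we need $\mathrm{R}_2(W_n)-1\geq 2n-1$, i.e. $\mathrm{R}(W_n,W_n)\geq 2n$.
\begin{itemize}
\item For odd $n\geq 7$, this is exactly the lower bound in Theorem~\ref{thm:2_ramsey}.
\item The only remaining odd value with $n\geq 4$ is $n=5$. Here we use the known value $\mathrm{R}(W_5,W_5)=15\geq 10$ due to Harborth and Mengersen, cited in the introduction.
\end{itemize}
Applying the doubling recursion $k-2$ times then gives $\mathrm{R}_k(W_n)-1\geq 2^{k-2}(2n-1)$.

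The main point needing care is this odd base case. One must notice that the trivial $k=1$ base is insufficient, so the improved bound $2n$ from Theorem~\ref{thm:2_ramsey} is essential, and the small case $n=5$ must be handled separately. The rest is routine iteration of the recursion.
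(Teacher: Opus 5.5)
Your proof is correct and is essentially the paper's argument: induction on $k$ via Theorem~\ref{thm:k_ramsey_lower} with $\ell=k-1$, using $\mathrm{R}_1(K_4^-)=4$ and $\mathrm{R}_1(K_3)=3$, with the odd case anchored at $k=2$ by the bound $\mathrm{R}(W_n,W_n)\geq 2n$. The small differences are to your credit: starting the even case at $k=1$ makes Theorem~\ref{thm:2_ramsey} unnecessary there (its bound $3n-2$ is exactly the $k=2$, $\ell=1$ instance of Theorem~\ref{thm:k_ramsey_lower}), and you notice that Theorem~\ref{thm:2_ramsey} is only stated for $n\geq 7$. For $n=5$ you could also simply cite Proposition~\ref{prop:lower_bound odd}, which is proved for all odd $n\geq 5$.
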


\begin{proof}
    The proof goes by induction on $k$.
    When $k=2$, we have $\mathrm{R}_2(W_n)\geq 3n-2$ for even $n$ and $\mathrm{R}_2(W_n)\geq 2n$ for odd $n$ by Theorem~\ref{thm:2_ramsey}.
    Suppose that the statement holds for $k-1$.
    Since $\mathrm{R}_1(K_4^-)=4$ and $\mathrm{R}_1(K_3)=3$, Theorem~\ref{thm:k_ramsey_lower} implies that
    \[
    \mathrm{R}_k(W_n)\geq (\mathrm{R}_1(K_4^-)-1)\cdot (\mathrm{R}_{k-1}(W_n)-1)+1\geq 3\cdot 3^{k-2}(n-1)+1=3^{k-1}(n-1)+1
    \]
    for even $n$, and that
    \[
    \mathrm{R}_k(W_n)\geq (\mathrm{R}_1(K_3)-1)\cdot (\mathrm{R}_{k-1}(W_n)-1)+1\geq 2\cdot 2^{k-3}(2n-1)+1=2^{k-2}(2n-1)+1
    \]
    for odd $n$, as desired.
\end{proof}

The following recursive formula gives an upper bound of $\mathrm{R}_k(W_n)$.

\begin{theorem}\label{thm:k_ramsey_upper}
    For positive integers $k\geq 2$ and $n\geq 4$, 
    it follows that
    \[
    \mathrm{R}_k(W_n)\leq k\bigl(\mathrm{R}(C_{n-1}, \overbrace{W_n, \ldots , W_n}^{k-1})-1\bigr)+2 \leq k\bigl(\mathrm{R}(C_{n-1},K_{\mathrm{R}_{k-1}(W_n)})-1\bigr)+2.
    \]
    Furthermore, 
    \[
    \mathrm{R}_k(W_n)\leq k\left(\frac{n-3}{2}\mathrm{R}_{k-1}(W_n)\cdot (\mathrm{R}_{k-1}(W_n)-1)+\mathrm{R}_{k-1}(W_n)-1\right)+2.
    \]
\end{theorem}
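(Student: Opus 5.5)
Set $M=\mathrm{R}(C_{n-1},W_n,\ldots,W_n)$ with $k-1$ copies of $W_n$, and $N=k(M-1)+2$. Take any $k$-coloring of $E(K_N)$ and fix a vertex $v$. Its degree is $N-1=k(M-1)+1$, so by pigeonhole there is a color $i$ with $|N_i(v)|\geq M$, where $N_i(v)$ denotes the neighbourhood of $v$ in color $i$. Inside $K[N_i(v)]$ we apply the definition of $M$, treating color $i$ as the "cycle color" and the other $k-1$ colors as the "wheel colors". This is legitimate because the Ramsey number is symmetric under permuting colors, so the special color may be any $i$.

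There are then two outcomes. If $N_i(v)$ contains a $C_{n-1}$ in color $i$, then $v$ together with this cycle is a color-$i$ copy of $W_n$, since $v$ is joined in color $i$ to every vertex of $N_i(v)$. Otherwise some color $j\neq i$ contains a monochromatic $W_n$. In either case $K_N$ has a monochromatic $W_n$, which gives the first inequality.

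For the second inequality it suffices to show $M\le \mathrm{R}(C_{n-1},K_{r})$ with $r=\mathrm{R}_{k-1}(W_n)$. Given a coloring of $K_{\mathrm{R}(C_{n-1},K_r)}$, merge the $k-1$ wheel colors into one color. Either color $i$ contains $C_{n-1}$, or the merged color contains a $K_r$. In the latter case this $K_r$ is $(k-1)$-colored, so by the definition of $r$ it contains a monochromatic $W_n$.

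The final bound needs a known upper estimate of the form $\mathrm{R}(C_m,K_r)\le \frac{m-2}{2}\,r(r-1)+r$ with $m=n-1$, so that $\frac{m-2}{2}=\frac{n-3}{2}$. This matches the stated expression $\frac{n-3}{2}r(r-1)+r-1+1$. I would quote the Erd\H{o}s–Faudree–Rousseau–Schelp bound $\mathrm{R}(C_m,K_r)\le (m-2)(r^{1/k}+2)+1$, or the classical bound $\mathrm{R}(C_m,K_r)\le ((m-2)(r^2-r)/2)+... $. If no suitable citation is available, I would prove the needed bound directly by induction on $r$, as follows.

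Suppose a graph has no $C_m$ and its complement has no $K_r$. Pick a vertex $x$. If its non-neighbourhood is large, the complement has no $K_{r-1}$ there, and induction applies. So $\deg x$ is large for every $x$, which means the graph has large minimum degree. A large-minimum-degree argument, using the fact that a $C_m$-free graph with independence number $<r$ has bounded size relative to $r$, then yields $N\le \frac{m-2}{2}r(r-1)+r-1$. Concretely, one would use the Erd\H{o}s–Gallai/Bondy path-and-cycle lemmas to find $C_m$ once degrees exceed about $(m-2)(r-1)/2$ within a subset of size bounded in terms of $r$.

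The main obstacle is this last step: pinning down exactly the bound $\mathrm{R}(C_{n-1},K_r)\le \frac{n-3}{2}r(r-1)+r$, either by citation or by the degree-induction argument. The first two inequalities are routine.
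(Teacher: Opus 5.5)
Your proofs of the first two inequalities are correct and are the same as the paper's.

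The gap is the third inequality, which you explicitly leave open. It requires exactly $\mathrm{R}(C_m,K_r)\le \frac{m-2}{2}r(r-1)+r$ with $m=n-1$, and neither route you propose delivers it.

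\begin{itemize}
\item The Erd\H{o}s--Faudree--Rousseau--Schelp bound as you wrote it, $(m-2)(r^{1/k}+2)+1$, is false. For large $r$ it lies below the trivial lower bound $(m-1)(r-1)+1$, which comes from colouring $r-1$ disjoint copies of $K_{m-1}$ red and everything else blue.
\item The genuine EFRS result is an asymptotic estimate of order $r^{1+1/\lfloor (m-1)/2\rfloor}$, and you give no derivation of the exact quadratic expression from it. The ``classical bound'' is left unspecified.
\item Your fallback induction handles the non-neighbourhood correctly: the blue neighbourhood of $x$ has fewer than $\mathrm{R}(C_m,K_{r-1})$ vertices. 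But you then try to turn large red degree into a $C_m$ via a minimum-degree threshold of about $(m-2)(r-1)/2$. No such threshold forces a cycle of exact length $m$. For $r=2$ the red graph is complete, and $K_{m-1}$ has minimum degree $m-2\ge (m-2)/2$ but no $C_m$.
\item The ``fact'' you invoke, that a $C_m$-free graph with independence number $<r$ has bounded order, is the statement you are trying to prove.
\end{itemize}

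The missing idea is to bound the red neighbourhood of the same vertex \emph{from above}. If $N_r(x)$ contained a red path on $m-1$ vertices, then $x$ would close it into a red $C_m$. Hence $N_r(x)$ contains neither a red $P_{m-1}$ nor a blue $K_r$. Chv\'atal's theorem $\mathrm{R}(T_s,K_t)=(s-1)(t-1)+1$ then gives $|N_r(x)|\le (m-2)(r-1)$. Adding the red and blue neighbourhood bounds at a single vertex yields
\[
\mathrm{R}(C_m,K_r)\le \mathrm{R}(C_m,K_{r-1})+(m-2)(r-1)+1 .
\]
Starting from $\mathrm{R}(C_m,K_2)=m$, this telescopes to exactly $\frac{m-2}{2}r(r-1)+r$. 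This is the paper's lemma on $\mathrm{R}(C_{n-1},K_m)$, with $m=n-1$ in your notation.

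If you prefer Erd\H{o}s--Gallai, it also works, but it must be applied inside $N_r(x)$. A $P_{m-1}$-free graph has average degree at most $m-3$, so by Caro--Wei its independence number is at least $\frac{1}{m-2}$ of its order, giving the same bound $(m-2)(r-1)$. No minimum-degree or cycle-finding argument on the whole graph is needed.
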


The following classic result by Erd\H{o}s et al.~\cite{erdos1978cycle} gives an asymptotic bound of the Ramsey number $\mathrm{R}(C_{\ell}, K_s)$.

\begin{theorem}[\cite{erdos1978cycle}]\label{thm:cycle_clique_ramsey_asympt}
    For every fixed integer $\ell$, there exists a constant $a_{\ell}>0$ only depends on $\ell$ such that
    \[
    \mathrm{R}(C_{\ell},K_s)\leq a_{\ell} s^{1+\frac{1}{m}},
    \]
    where $m=\left\lceil\frac{\ell}{2}\right\rceil-1$.
\end{theorem}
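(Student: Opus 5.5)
This is the classical result of Erd\H{o}s, Faudree, Rousseau and Schelp, and the paper only cites it. Still, here is how I would prove it. I would pass to the red graph: it suffices to show that every graph $G$ on $N\geq a_\ell s^{1+1/m}$ vertices with no $C_\ell$ has an independent set of size $s$. The argument has two layers. The first is a greedy reduction to a minimum-degree statement. The second is a key lemma: there is a constant $c_\ell>0$ such that every $C_\ell$-free graph $H$ with minimum degree $\delta$ satisfies $\alpha(H)\geq c_\ell\delta^{m}$. Note that $\ell\in\{2m+1,2m+2\}$.

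\emph{Reduction.} Put $\delta_0=\lceil (s/c_\ell)^{1/m}\rceil$ and run the following process on $G$.
\begin{enumerate}
\item While the current graph has a vertex $v$ of degree less than $\delta_0$, put $v$ into an independent set $I$ and delete its closed neighbourhood. Each step deletes at most $\delta_0$ vertices.
\item If the current graph becomes empty, then $|I|\geq N/\delta_0$. This is at least $s$ once $a_\ell$ is large, since $N/\delta_0\approx a_\ell c_\ell^{1/m} s^{1+1/m}/s^{1/m}$.
\item Otherwise the process stops at a nonempty induced subgraph $H$ with minimum degree at least $\delta_0$. $H$ is still $C_\ell$-free, so the key lemma gives $\alpha(H)\geq c_\ell\delta_0^m\geq s$.
\end{enumerate}
In either case we get an independent set of size $s$. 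This step is routine.

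\emph{Key lemma.} I would grow a breadth-first search from a vertex $x$ of $H$, with layers $L_0=\{x\},L_1,\dots,L_m$. The plan is to prove by induction on $i\leq m$ that $|L_i|\geq c\,\delta^{i}$ and that each induced graph $H[L_i]$ is $O_\ell(1)$-degenerate. From degeneracy, $\alpha(H[L_m])\geq |L_m|/O_\ell(1)\geq c_\ell\delta^m$, which proves the lemma.

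The tool for both claims is the standard one. Take a long path, or a long path in the bipartite graph between consecutive layers. Two of its vertices are joined back to a common ancestor by BFS-tree paths, and the lengths of the tree paths are fixed by the depth. By choosing the endpoints on the long path at an appropriate distance, one closes a cycle of length exactly $\ell$. With parity handled by the choice $m=\lceil\ell/2\rceil-1$, this shows that long paths cannot occur. Then the Erd\H{o}s--Gallai theorem forces the relevant induced and bipartite graphs to have average degree $O_\ell(1)$.

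For expansion, each vertex of $L_i$ has degree at least $\delta$. Only $O_\ell(1)$ of its edges on average stay inside $L_i$ or go back to $L_{i-1}$, so at least $\delta-O_\ell(1)$ edges go forward to $L_{i+1}$. The bound on the average degree of $H[L_i,L_{i+1}]$ seen from the $L_{i+1}$ side then gives $|L_{i+1}|\gtrsim \delta|L_i|$.

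\emph{Main obstacle.} I expect the hardest step to be the cycle-closing argument. Only $C_\ell$ is forbidden, not all short cycles, so the closed cycle must have length exactly $\ell$. This requires the Bondy--Simonovits-type device of finding, inside a long path between layers, a subpath whose endpoints have BFS ancestors that diverge at precisely the right depth. I would first try the version in which all paths are taken inside the bipartite graphs $H[L_i,L_{i+1}]$ and the length is adjusted one step at a time along the path, so that every intermediate length, in particular $\ell$, is realised.
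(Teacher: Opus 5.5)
There is a genuine gap, and it sits exactly where you say parity is ``handled by the choice of $m$''. (The paper only cites this result, so I am judging your plan on its own.) For odd $\ell=2m+1\ge 5$ your key lemma is false. The graph $K_{\delta,\delta}$ has no odd cycle, has minimum degree $\delta$, and has independence number $\delta$, far below $c_\ell\delta^m$ since $m\ge 2$. Step~3 of your reduction fails for the same reason: an $H\cong K_{D,D}$ with $\delta_0\le D<s$ has $\alpha(H)=D<s$. So no refinement of the cycle-closing device can save the minimum-degree route.

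The step that breaks is the expansion. Every edge of the BFS tree, and every edge of $H[L_i,L_{i+1}]$, joins vertices whose depths have opposite parity. Hence any cycle built from a path in $H[L_i,L_{i+1}]$ and two tree paths is even. For odd $\ell$ you therefore cannot exclude long paths between consecutive layers, and the layers need not grow at all: in $K_{\delta,\delta}$ you get $|L_2|=\delta-1<|L_1|$. For even $\ell$ there is no parity obstruction. There your plan is essentially the Bondy--Simonovits layer argument and is sound, so the problem is confined to odd $\ell\ge 5$.

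What survives for every $\ell$ is your within-layer claim. For $1\le i\le m$, a path inside the single layer $N_i(v)$ may have either parity. Two tree paths to a lowest common ancestor at depth $t$ contribute $2(i-t)\le 2m<\ell$, so a positive remaining length is needed. The same exact-length device then shows $G[N_i(v)]$ has no long path, hence $\alpha(G[N_i(v)])\ge |N_i(v)|/C_\ell$. The standard fix uses only this, with no minimum degree and no expansion lemma.
\begin{enumerate}
\item Put $\lambda=\max\{2,(C_\ell s)^{1/m}\}$ and pick any vertex $v$.
\item If $|N_{i+1}(v)|\ge\lambda|N_i(v)|$ for all $0\le i<m$, then $|N_m(v)|\ge C_\ell s$, and $N_m(v)$ already contains $s$ independent vertices.
\item Otherwise let $i$ be the least index with $|N_{i+1}(v)|<\lambda|N_i(v)|$. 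Minimality gives $|N_0(v)\cup\dots\cup N_{i+1}(v)|\le(\lambda+2)|N_i(v)|$. Meanwhile $N_i(v)$ contains an independent set of size at least $|N_i(v)|/C_\ell$, all of whose neighbours lie in that ball.
\item Delete the ball and repeat on the remaining (still $C_\ell$-free) graph.
\end{enumerate}
Each unit of independence costs at most $C_\ell(\lambda+2)$ deleted vertices. Hence $N\ge C_\ell(\lambda+2)s=O(s^{1+1/m})$ forces an independent set of size $s$.
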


By Theorems~\ref{thm:k_ramsey_upper} and \ref{thm:cycle_clique_ramsey_asympt}, we obtain the following asymptotic recursive upper bound of $\mathrm{R}_k(W_n)$.

\begin{corollary}\label{cor:k_ramsey_asympt}
    For integers $k\geq 2$ and $n\geq 4$, there exist positive constants $c_n$ depending only on $n$ such that
    \[
    \mathrm{R}_k(W_n)\leq k\left(c_n\mathrm{R}_{k-1}(W_n)^{1+\varepsilon_n}-1\right)+2,
    \]
    where $\varepsilon_n=\frac{2}{n-2}$ for even $n$ and $\varepsilon_n=\frac{2}{n-3}$ for odd $n$.
\end{corollary}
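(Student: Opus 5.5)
We combine the middle inequality of Theorem~\ref{thm:k_ramsey_upper} with Theorem~\ref{thm:cycle_clique_ramsey_asympt}. Write $s=\mathrm{R}_{k-1}(W_n)$. Theorem~\ref{thm:k_ramsey_upper} gives
\[
\mathrm{R}_k(W_n)\leq k\bigl(\mathrm{R}(C_{n-1},K_{s})-1\bigr)+2,
\]
so it suffices to show that $\mathrm{R}(C_{n-1},K_s)\leq c_n s^{1+\varepsilon_n}$ for a constant $c_n$ depending only on $n$. The bound then follows since the map $x\mapsto k(x-1)+2$ is increasing.

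We apply Theorem~\ref{thm:cycle_clique_ramsey_asympt} with $\ell=n-1$, which is fixed once $n$ is fixed. It yields $\mathrm{R}(C_{n-1},K_s)\leq a_{n-1}s^{1+1/m}$ with $m=\lceil (n-1)/2\rceil-1$, and we set $c_n=a_{n-1}$. The only computation left is the exponent.
\begin{itemize}
\item For even $n$, $n-1$ is odd, so $\lceil (n-1)/2\rceil=n/2$ and $m=(n-2)/2$. Hence $1/m=2/(n-2)=\varepsilon_n$.
\item For odd $n$, $\lceil (n-1)/2\rceil=(n-1)/2$ and $m=(n-3)/2$. Hence $1/m=2/(n-3)=\varepsilon_n$.
\end{itemize}

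There is a degenerate case. For odd $n$ the exponent requires $m\geq 1$, i.e.\ $n\geq 5$, so we must check small values separately.
\begin{itemize}
\item When $n=5$ we get $m=1$ and $\varepsilon_5=1$. This is consistent with the elementary bound $\mathrm{R}(C_4,K_s)=O(s^2)$.
\item When $n=4$ (even) we get $m=1$ and $\varepsilon_4=1$. Here $C_3=K_3$, and the classical bound $\mathrm{R}(K_3,K_s)\leq \binom{s+1}{2}$ gives $c_4=1$ directly.
\end{itemize}
Since $n\geq 4$, every case has $m\geq 1$, so $\varepsilon_n$ is well defined.

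The main point to verify is that Theorem~\ref{thm:cycle_clique_ramsey_asympt} applies uniformly in $s$, including small $s$. If it is only asymptotic, we enlarge $c_n$ to cover the finitely many small $s$. This is legitimate because $s=\mathrm{R}_{k-1}(W_n)\geq n$ and every finite range is absorbed into the constant. Alternatively, the last inequality of Theorem~\ref{thm:k_ramsey_upper} already gives a polynomial bound valid for all $s$, which handles any leftover range. Substituting then gives $\mathrm{R}_k(W_n)\leq k\left(c_n\mathrm{R}_{k-1}(W_n)^{1+\varepsilon_n}-1\right)+2$.
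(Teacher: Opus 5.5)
Your proposal is correct and is essentially the paper's proof. You set $c_n=a_{n-1}$, check by parity that $1/(\lceil (n-1)/2\rceil-1)=\varepsilon_n$, and chain the middle inequality of Theorem~\ref{thm:k_ramsey_upper} with Theorem~\ref{thm:cycle_clique_ramsey_asympt}. Your extra remarks on small $n$ and on uniformity in $s$ are harmless but unnecessary, since the cited theorem is stated for every fixed $\ell$ (including $\ell=3,4$) and all $s$.
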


\begin{proof}
    We set $c_n=a_{n-1}$, where $a_{n-1}$ is the constant in Theorem~\ref{thm:cycle_clique_ramsey_asympt}.
    Since $\varepsilon_n=\frac{2}{n-2}=1/\left(\left\lceil\frac{n-1}{2}\right\rceil-1\right)$ for even $n$ and $\varepsilon_n=\frac{2}{n-3}=1/\left(\left\lceil\frac{n-1}{2}\right\rceil-1\right)$ for odd $n$, Theorems~\ref{thm:k_ramsey_upper} and \ref{thm:cycle_clique_ramsey_asympt} imply that
    \[
    \mathrm{R}_k(W_n)\leq k\bigl(\mathrm{R}(C_{n-1},K_{\mathrm{R}_{k-1}(W_n)})-1\bigr)+2\leq k\bigl(c_n\mathrm{R}_{k-1}(W_n)^{1+\varepsilon_n}-1\bigr)+2.
    \]
\end{proof}

Since $1+\varepsilon_n<2$ for $n\geq 6$, the bound in Corollary~\ref{cor:k_ramsey_asympt} is better than that in Theorem~\ref{thm:k_ramsey_upper} for sufficiently large $n$.

In the rest of the paper, we will give a proof of  Theorem~\ref{thm:2_ramsey} in Section~\ref{section:2_colors}, and proofs of Theorems~\ref{thm:k_ramsey_lower} and \ref{thm:k_ramsey_upper} in Section~\ref{section:many_colors}.

\section{Two colors}\label{section:2_colors}

In this section, we give lower and upper bounds of the Ramsey number $\mathrm{R}(W_n,W_n)$.
Let $G$ be a graph with an edge coloring with two colors red and blue.
For a set of vertices $S\subseteq V(G)$, let $G_r[S]$ (resp. $G_b[S]$) denote the graph whose vertex set is $S$ and edge set is all the red (resp. blue) edges joining two vertices of $S$.
In particular, we use $G_r$ and $G_b$ to denote $G_r[V(G)]$ and $G_b[V(G)]$, respectively.
Note that for a vertex $v$, $N_{G_r}(v)$ ($N_{G_b}(v)$) is the set of vertices that are adjacent to $v$ in red (resp. blue) edges in $G$.

We will show three propositions which obviously yield Theorem~\ref{thm:2_ramsey}.

\subsection{Lower bound}

We will show a lower bound of $\mathrm{R}(W_n,W_n)$ in Theorem~\ref{thm:2_ramsey}.
The constructions of edge colorings to show the bound are completely different according to the parity of $n$, so we give separated proofs.

\begin{proposition}\label{prop:even_wheel_lower_bound}
    For every even integer $n\geq 4$,
    \[
    \mathrm{R}(W_n,W_n)\geq 3n-2.
    \]
\end{proposition}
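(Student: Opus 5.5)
We need a 2-coloring of $K_{3n-3}$ with no monochromatic $W_n$. For even $n$, the rim $C_{n-1}$ is an odd cycle, so $W_n$ contains a triangle on the hub and any rim edge. More usefully, any copy of $W_n$ in a graph forces the hub's neighborhood to contain an odd cycle of length $n-1$. We will use the Burr--Erd\H{o}s-type construction based on the Ramsey-critical coloring of $K_3$ (i.e.\ $K_2$ avoiding monochromatic... ) blown up. Concretely, partition $V(K_{3n-3})$ into three parts $V_1,V_2,V_3$, each of size $n-1$. Color every edge inside each $V_i$ red. Color every edge between distinct parts blue.

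We then check both colors. \emph{Red:} the red graph is a disjoint union of three copies of $K_{n-1}$. Any red $W_n$ has $n$ vertices and is connected, so it must lie inside one component, which has only $n-1$ vertices. Hence there is no red $W_n$. \emph{Blue:} the blue graph is the complete tripartite graph $K_{n-1,n-1,n-1}$. If a blue $W_n$ exists with hub $v\in V_1$, then its rim $C_{n-1}$ lies in the blue neighborhood of $v$, which is $V_2\cup V_3$. The blue graph induced on $V_2\cup V_3$ is bipartite, so it contains no odd cycle. But $n$ is even, so $n-1$ is odd, a contradiction. By symmetry the same holds for a hub in $V_2$ or $V_3$. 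Therefore $K_{3n-3}\not\to(W_n,W_n)$, giving $\mathrm{R}(W_n,W_n)\ge 3n-2$.

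The only delicate point is the blue case. There we need parity of $n$ to make the rim odd, and we need to observe that the hub's neighborhood spans a bipartite graph. Both of these are immediate, so I expect no real obstacle. The construction also needs no lower bound on $n$ beyond $n\ge 4$, matching the statement.
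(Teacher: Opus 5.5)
Your proof is correct and is essentially the paper's argument. The paper uses the same partition of $K_{3n-3}$ into three parts of size $n-1$, only with the colors swapped (blue inside the parts, red between them), and it rules out both colors exactly as you do: the within-part color has components of order $n-1$, and for the between-part color each vertex's neighborhood spans a bipartite $K_{n-1,n-1}$, which cannot contain the odd rim $C_{n-1}$. The garbled parenthetical about the ``Ramsey-critical coloring of $K_3$'' plays no role and can be deleted.
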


\begin{proof}
    Let $G$ be the complete graph with $3n-3$ vertices, and let $A\cup B\cup C$ be a vertex partition such that $|A|=|B|=|C|=n-1$.
    We color the edges of $G$ so that all edges in $G[A]$, $G[B]$, and $G[C]$ are blue and all other edges are red.
    
    Then, since $G_b$ is isomorphic to a disjoint union of three complete graphs of order $n-1$, $G$ has no blue $W_n$.
    On the other hand, since $G_r$ is isomorphic to the complete 3-partite graph $K_{n-1,n-1,n-1}$, for every vertex $v$ of $G$, $N_{G_r}(v)$ induces a complete bipartite graph $K_{n-1,n-1}$ in $G_r$.
    As $n-1$ is an odd integer, $G_r$ does not contain a copy of $W_n$. This proves Proposition~\ref{prop:even_wheel_lower_bound}.
\end{proof}

\begin{proposition}\label{prop:lower_bound odd}
    For all odd integers $n \geq 5$,
    \[
        \mathrm{R}(W_n, W_n) \geq 2n.
    \]
\end{proposition}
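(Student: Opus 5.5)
The plan is to exhibit an explicit red--blue coloring of $K_{2n-1}$ with no monochromatic $W_n$, in the spirit of the proof of Proposition~\ref{prop:even_wheel_lower_bound}. The obvious starting point is two blue cliques of order $n-1$ joined by red edges. There the red graph is bipartite, hence contains no triangle and so no $W_n$, but this only uses $2n-2$ vertices. To gain one more vertex I would enlarge one part to $n$ vertices and make its blue graph sparse enough that no vertex has full blue degree, while keeping the red edges inside it so sparse that no red wheel appears.

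\textbf{Construction.} Partition $V(K_{2n-1})=A\cup B$ with $|A|=n-1$ and $|B|=n$. Color all edges inside $A$ blue and all $A$--$B$ edges red. Inside $B$ (where $n$ is odd), let the red edges form a forest $F$ consisting of one path $P_3$ together with a matching of $(n-3)/2$ edges covering the remaining $n-3$ vertices. Color every other edge of $B$ blue. In particular every vertex of $B$ is incident to at least one red edge of $F$, and every vertex of $B$ has at most $2$ red neighbours in $B$.

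\textbf{No blue $W_n$.} The blue graph has no $A$--$B$ edges, so a blue $W_n$ would lie entirely inside $A$ or entirely inside $B$. It cannot lie in $A$, since $|A|=n-1<n$. Inside $B$ it would need all $n$ vertices of $B$, so its hub would be blue-adjacent to all other $n-1$ vertices of $B$. This is impossible, because every vertex of $B$ has a red neighbour in $B$.

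\textbf{No red $W_n$.} The hub of a red $W_n$ needs a red $C_{n-1}$ in its red neighbourhood; I check both possible locations of the hub.
\begin{itemize}
\item If the hub lies in $A$, its red neighbourhood is $B$, and the red graph on $B$ is the forest $F$, which has no cycle.
\item If the hub $b$ lies in $B$, its red neighbourhood is $A\cup N_F(b)$ with $|N_F(b)|\le 2$. Here $A$ is red-independent, and $N_F(b)$ is red-independent because $F$ is a forest and so contains no triangle. Hence the red graph on this neighbourhood is bipartite with one side $N_F(b)$ of size at most $2$. Every cycle in it therefore has length at most $4$.
\end{itemize}
For $n\ge 7$ we have $n-1\ge 6>4$, so there is no red $C_{n-1}$ and hence no red $W_n$.

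The main obstacle is the tension between the two colors: making the blue graph on $B$ avoid a vertex of full degree (which a plain matching cannot do, since $n$ is odd) forces a vertex of red degree $2$ in $B$, and this creates red $C_4$'s through $A$. That is harmless exactly when $n-1>4$. The remaining case $n=5$ is handled by the known value $\mathrm{R}(W_5,W_5)=15\ge 10$ of Harborth and Mengersen~\cite{harborth1988all}. Alternatively one can search directly for a small coloring of $K_9$ with no monochromatic $W_5$.
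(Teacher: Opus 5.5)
Your proof is correct, but it takes a genuinely different route from the paper's.

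For $n\ge 7$ each step holds:
\begin{itemize}
\item There are no blue $A$--$B$ edges, so a blue $W_n$ would have to lie inside $B$. Its hub would then need blue degree $n-1$ in $B$, which $F$ rules out because it covers every vertex of $B$.
\item A red hub in $A$ sees only the forest $F$, which has no cycles.
\item A red hub $b\in B$ sees a bipartite red graph whose side $N_F(b)$ has size at most $2$. All its cycles therefore have length at most $4<n-1$.
\end{itemize}

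The paper instead blows up a five-vertex pattern: $v_0$ plus classes $A,B,C,D$ of size $\frac{n-1}{2}$. Blue consists of the cliques on $A$ and $C$ and the edges $A$--$B$, $B$--$D$, $D$--$C$, $v_0$--$(A\cup C)$. The relabelling $A\to B\to C\to D\to A$ (fixing $v_0$) carries the blue graph onto the red graph, so only red has to be checked. The one nontrivial case is a hub in $B$, whose red neighbourhood is $K_{(n-3)/2}$ joined to an independent set of size $\frac{n+1}{2}$. The paper settles it with a toughness (non-Hamiltonicity) argument.

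Your construction buys elementarity. It is a one-vertex extension of the trivial two-clique colouring of $K_{2n-2}$. It needs only that forests are acyclic and that a bipartite graph with a side of size $2$ has no cycle longer than $4$. It is also self-contained on the range $n\ge 7$ that Theorem~\ref{thm:2_ramsey} actually uses.

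What it costs is uniformity. The colouring is asymmetric, so both colours must be verified separately. It also genuinely fails at $n=5$: the middle vertex of the $P_3$ sees a red $K_{2,4}\supseteq C_4$. That forces you to import $\mathrm{R}(W_5,W_5)=15$ from Harborth and Mengersen, a much heavier result than the bound being proved. The paper's self-complementary colouring covers every odd $n\ge 5$, including $n=5$, with one construction.
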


\begin{proof}
    Let $G$ be the complete graph with $2n-1$ vertices, and let $A \cup  B \cup C \cup D \cup \{v_0\}$ be a vertex partition such that $|A| = |B| = |C| = |D| = \frac{n-1}{2}$.
    We color the edges of $G$ so that all edges in $G[A], G[C]$ and all edges in $E_G(A,B)\cup E_G(B,D)\cup E_G(D,C)\cup E_G(v_0, A\cup C)$ are blue and all other edges are red.
    We shall show that neither a red $W_n$ nor a blue $W_n$ exists in this colored graph $G$. 
    As the roles of the colors red and blue are symmetric, it suffices to show that $G$ has no red $W_n$.

    Suppose that $\{v\} \cup W$ forms a red $W_n$ with the center $v$ in $G$. 
    Note that $G_r[W]$ contains a cycle of length $n-1$ in $G$. 
    Without loss of generality, we may assume that $v \in \{v_0\}\cup A\cup B$.

    If $v=v_0$, then we have that $W \subseteq N_{G_r}(v_0) = B\cup D$.
    However, since $G_r[B\cup D]$ is the disjoint union of complete graphs $G_r[B]$ and $G_r[D]$ of size $\frac{n-1}{2}$, it does not contain a cycle of length $n-1$, a contradiction.

    If $v \in  A$, then we have that $W \subseteq N_{G_r}(v) = C\cup D$. However, since $G_r[C\cup D]$ is the disjoint union of the independent vertices $G_r[C]$ of size $\frac{n-1}{2}$ and the complete graph $G_r[D]$ of size $n-1$, it does not contain a cycle of length $n-1$, a contradiction.

    If $v \in B$, then we have that $W \subseteq N_{G_r}(v) = (B\setminus \{v\}) \cup \{v_0\} \cup C$. 
    Note that $|(B\setminus \{v\})\cup \{v_0\}\cup C| = n-1$. 
    Since $G_r[(B\setminus \{v\}) \cup \{v_0\} \cup C]$ is the join of the complete graph $G_r[B\setminus \{v\}]$ of size $\frac{n-1}{2}-1$ and the independent vertices $G_r[\{v_0\}\cup C]$ of size $\frac{n-1}{2}+1$, it is not $1$-tough. Thus, no Hamilton cycle (equivalently, a cycle of length $n-1$) exists in $G_r[(B\setminus \{v\})\cup \{v_0\}\cup C]$, a contradiction.

    Combining these three cases, we conclude that $G$ does not contain a red $W_n$. This proves Proposition~\ref{prop:lower_bound odd}.
\end{proof}

\subsection{Upper bound}

In order to improve the upper bound of $\mathrm{R}(W_n,W_n)$, we use following results.

\begin{theorem}[Diagonal Cycle Ramsey Number \cite{faudree1974all, karolyi2001generalized, rosta1973ramsey}]\label{thm:cycle_ramsey}
    For integers $k\geq 3$:
    \[
        \mathrm{R}(C_k, C_k) = 
        \begin{cases}
            2k - 1 & \text{if } k \text{ is odd}; \\
            6 & \text{if } k = 4 \quad (\text{exceptional case}); \\
            \frac{3k}{2} - 1 & \text{if } k \text{ is even and } k \geq 6.
        \end{cases}
    \]
\end{theorem}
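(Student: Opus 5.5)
The statement has two parts: constructions for the lower bounds, and an argument for the matching upper bounds. Since this is the classical result of Faudree--Schelp and Rosta, I would follow their structure.

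\textbf{Lower bounds.} For odd $k$, take $K_{2k-2}$ split into two sets of size $k-1$. Colour both cliques blue and all edges between them red. The red graph is bipartite, so it has no odd cycle, in particular no $C_k$. Each blue component has only $k-1$ vertices, so there is no blue $C_k$.

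For even $k\ge 6$, take a set $X$ of size $k-1$ and a set $Y$ of size $\frac{k}{2}-1$. Colour the edges inside $X$ and inside $Y$ blue, and the edges between $X$ and $Y$ red. Then:
\begin{itemize}
\item the blue components have at most $k-1$ vertices, so there is no blue $C_k$;
\item the red graph is bipartite with smaller side $Y$, so its longest cycle has length at most $2|Y|=k-2$, so there is no red $C_k$.
\end{itemize}
This gives $\mathrm{R}(C_k,C_k)\ge \frac{3k}{2}-1$. For $k=4$, colour $K_5$ as a red pentagon with a blue pentagram. Both colour classes are $C_5$, which contains no $C_4$.

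\textbf{Upper bounds.} I would argue by induction on $k$ together with a longest-cycle argument, in the spirit of Bondy--Erdős and Faudree--Schelp. Fix a $2$-colouring of $K_N$ with $N=2k-1$ (odd $k$) or $N=\frac{3k}{2}-1$ (even $k$).

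\begin{enumerate}
\item By induction, the colouring contains a monochromatic $C_{k-1}$ or $C_{k-2}$. I would aim for a monochromatic cycle of length close to $k$, say red $C=x_1x_2\cdots x_m$.
\item Try to extend $C$ by one vertex using the vertices outside it. If an outside vertex $y$ is red-adjacent to two consecutive vertices $x_i,x_{i+1}$, insert $y$ between them and lengthen the cycle.
\item Otherwise, every outside vertex has many blue edges to $C$. A counting argument on these blue edges, over the $N-m$ outside vertices, should build a blue $C_k$. It would alternate between $C$ and the outside set, using chords such as $x_ix_{i+2}$ where needed.
\end{enumerate}
The parity of $k$ enters in step 3. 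For even $k$, alternating blue bipartite structure between $C$ and the outside set suffices; this is why only about $k/2$ extra vertices are needed. For odd $k$, a bipartite blue structure cannot close an odd cycle, so one needs the extra $\approx k$ vertices to force a non-bipartite configuration.

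\textbf{Main obstacle.} The hard part is the exact case analysis in step 3, especially the stability step. One must show that when neither colour contains $C_k$, the colouring is forced to look like the extremal constructions above: two cliques with a bipartite colour between them for odd $k$, or a clique plus a small side for even $k$. I would first treat large $k$ via a regularity- or stability-type argument combined with Erdős--Gallai-type cycle bounds. The small values of $k$ I would check separately, including the exceptional value $\mathrm{R}(C_4,C_4)=6$, which is verified directly: in $K_6$ some vertex has three neighbours in one colour, and a short case check then gives a monochromatic $C_4$.
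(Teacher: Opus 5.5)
Your lower-bound constructions are correct, but the statement as written is false at $k=3$, and your plan does not catch this. Since $C_3=K_3$, we have $\mathrm{R}(C_3,C_3)=\mathrm{R}(K_3,K_3)=6$, not $2\cdot 3-1=5$. Indeed, the colouring you use for $k=4$ (red pentagon, blue pentagram on $K_5$) contains no monochromatic triangle. So your odd-$k$ upper-bound argument with $N=2k-1$ must fail at $k=3$. The correct classical theorem of Faudree--Schelp and Rosta gives $2k-1$ only for odd $k\ge 5$, and has two exceptions, $\mathrm{R}(C_3,C_3)=\mathrm{R}(C_4,C_4)=6$. The paper does not prove this theorem; it only quotes it, and uses it only with $k=n-1\ge 6$, where the formula is correct.

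Even with $k=3$ excluded, your upper bound is a plan rather than a proof. Step 3 is the whole content of the theorem: turning the failure to extend the red cycle into a blue $C_k$ ``by a counting argument''. Nothing in your sketch shows why exactly $2k-1$ or $\frac{3k}{2}-1$ vertices suffice. For even $k\ge 6$, step 1 only guarantees a monochromatic $C_{k-2}$, because $\mathrm{R}(C_{k-1},C_{k-1})=2k-3>\frac{3k}{2}-1$. So a single insertion does not reach length $k$, and that cycle may be in the colour you later have to abandon.

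Your fallback does not repair this. Regularity/stability methods give exact values only for $k\ge k_0$, with a tower-type, inexplicit $k_0$. So ``checking the small values separately'' is not a finite verification you can carry out. Moreover, the exact stability step, which you yourself call the main obstacle, is still missing. The known proofs are elementary but lengthy case analyses of exactly this extension/structure step. You would need to supply that analysis or, as the paper does, cite it.
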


Our key idea is the use of a result on pancyclicity of graphs by Brandt et al.~\cite{brandt1998weakly}.
A graph is said to be \emph{weakly pancyclic} if it contains a cycle of every length between its girth and circumference.

\begin{theorem}[\cite{brandt1998weakly}]\label{thm:weakly_pancyclic}
    Every non-bipartite graph $G$ of order $n$ with
    $\delta(G)\geq \frac{n+2}{3}$ is weakly pancyclic and has girth 3 or 4, where $\delta(G)$ is the minimum degree of $G$.
\end{theorem}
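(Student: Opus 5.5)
The plan has two parts: first bound the girth, then prove weak pancyclicity by shortening long cycles one step at a time.

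\emph{Girth.} Suppose $G$ has girth at least $5$ and pick any vertex $v$, of degree $d\geq \delta(G)$. Girth $\geq 5$ forces the following:
\begin{itemize}
\item $N(v)$ is independent;
\item no two neighbours of $v$ share a neighbour other than $v$, so the sets $N(u)\setminus\{v\}$ for $u\in N(v)$ are pairwise disjoint and disjoint from $N(v)\cup\{v\}$.
\end{itemize}
Hence $n\geq 1+d+d(d-1)=d^2+1\geq \frac{(n+2)^2}{9}+1$, which is equivalent to $n^2-5n+13\leq 0$. This has no real solution, so the girth is $3$ or $4$. Non-bipartiteness is not needed here.

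\emph{Weak pancyclicity.} Let $g\in\{3,4\}$ be the girth and $c$ the circumference. I would show that every cycle $C$ of length $\ell$ with $g<\ell\leq c$ can be shortened to a cycle of length $\ell-1$; iterating from a longest cycle then gives all lengths between $g$ and $c$. The tool is a Bondy-type chord and neighbour count on $C=x_1x_2\cdots x_\ell$.
\begin{itemize}
\item If some vertex has neighbours $x_i$ and $x_{i+2}$ on $C$ (indices mod $\ell$), or if $C$ has chords $x_ix_j$, $x_{i+1}x_{j+2}$ or similar, we can reroute and skip exactly one vertex.
\item If no such configuration exists, the neighbourhoods on $C$ of vertices on and off $C$ are forced to be \emph{sparse}: roughly, each vertex sees at most about a third of the cycle in a rigid periodic pattern. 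Summing degrees over consecutive triples $x_i,x_{i+1},x_{i+2}$ and comparing with $3\delta\geq n+2$ should give a contradiction unless the structure is very special.
\end{itemize}

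\emph{Main obstacle: parity.} In bipartite-like configurations the chord arguments naturally produce cycles of length $\ell-2$ but not $\ell-1$. Non-bipartiteness must be used precisely here. I would split into two cases.
\begin{itemize}
\item \emph{$G$ contains a triangle.} Use the triangle, together with the degree bound, to create odd/even switches: attach a vertex or a triangle to the current cycle so as to gain or lose exactly one vertex.
\item \emph{$G$ is triangle-free.} Since $\delta>n/3$, I would invoke the structure theory of dense triangle-free graphs (Häggkvist, Jin, Chen--Jin--Koh): such graphs are homomorphic to an Andrásfai graph, hence have a very explicit blow-up structure. Cycle lengths in these blow-ups can then be checked directly, using that a non-bipartite blow-up contains short odd cycles (length $5$) from which all longer lengths are reachable by inserting pairs of vertices from the large classes.
\end{itemize}
Making the triangle-free case fully rigorous, especially near the boundary value $\delta=\frac{n+2}{3}$, is where I expect most of the work.
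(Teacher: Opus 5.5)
The paper gives no proof of this statement; it is quoted from \cite{brandt1998weakly}, so your proposal has to stand on its own. The girth part does: take $v$ of minimum degree, or note that each neighbour of $v$ contributes at least $\delta-1$ new vertices. Either way $n\ge\delta^2+1$, which contradicts $\delta\ge\frac{n+2}{3}$. Weak pancyclicity is the real content of the theorem, and there the proposal has a genuine gap: it is only sketched, and both of its pillars fail.

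\emph{The local shortening step.} As stated it is already off. A vertex outside $C$ adjacent to $x_i$ and $x_{i+2}$ only gives another $\ell$-cycle; the basic shortening configurations are a chord $x_ix_{i+2}$ or an outside vertex adjacent to $x_i$ and $x_{i+3}$. More seriously, no analysis of chords and neighbours of $C$ can work in general. Take the balanced blow-up of $C_5$ with independent classes $V_1,\dots,V_5$ of size $m\ge 3$, so $\delta=\frac{2n}{5}\ge\frac{n+2}{3}$. Let $C$ be a $6$-cycle inside $V_1\cup V_2$. Every vertex with a neighbour on $C$ lies in $V_5\cup V_1\cup V_2\cup V_3$, which induces a bipartite graph. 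So every configuration built from $C$ and its neighbours gives an even cycle, and the degree sums over triples give no contradiction. Yet every $5$-cycle must pass through $V_4$, which has no neighbour on $C$. Thus ``an $\ell$-cycle yields an $(\ell-1)$-cycle'' needs a global argument whenever $\ell-1$ is odd. Your triangle case (``odd/even switches'') is not specified enough to supply one.

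\emph{The triangle-free case.} This rests on a false structural claim: triangle-free graphs with $\delta>\frac n3$ need not be homomorphic to an Andr\'asfai graph. Consider H\"aggkvist's blow-up of the Gr\"otzsch graph: each outer-cycle vertex is replaced by $3t$ independent copies, each of the five inner vertices by $2t$, and the centre by $4t$. It is triangle-free and $4$-chromatic, with $n=29t$ and $\delta=10t$; for $t=2$ it has $\delta=20=\frac{n+2}{3}$. Andr\'asfai graphs are $3$-colourable, so this graph maps to none of them. The Andr\'asfai conclusion holds only when $\delta>\frac{10n}{29}$ or when $G$ is additionally $3$-colourable. For $\frac n3<\delta\le\frac{10n}{29}$ one must also allow Vega graphs (Brandt--Thomass\'e), which your plan does not treat. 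Even when such a homomorphism exists, it only makes $G$ a subgraph of a blow-up, not the blow-up itself. So cycle lengths cannot simply be ``checked directly''; a degree-based argument is still needed to realise every length up to the circumference.

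As it stands, the proposal proves only the girth assertion.
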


By using a well known Dirac's theorem on Hamiltonicity of a graph~\cite{dirac1952some} and its variation, we can show the following.

\begin{lemma}\label{lem:circumference}
    Let $G$ be a graph of order $n$ with $\delta(G)\geq \frac{n+2}{3}$.
    Then $G$ has circumference at least $\frac{n-1}{2}$.
\end{lemma}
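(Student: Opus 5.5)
Fix a graph $G$ of order $n$ with $\delta(G)\geq \frac{n+2}{3}$ and consider its connected components. The plan is to show that some component is Hamiltonian, or close enough to Hamiltonian, and has at least $\frac{n-1}{2}$ vertices.

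\emph{Step 1: there are at most two components.} Every component has more than $\delta(G)\geq\frac{n+2}{3}$ vertices. Hence $G$ has at most two components, since three would give more than $n+2$ vertices.

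\emph{Step 2: two components.} Suppose $G$ has exactly two components $H_1,H_2$, with $|H_1|\geq |H_2|$. Then $|H_1|\geq n/2\geq\frac{n-1}{2}$. Moreover $|H_1|\leq n-|H_2|\leq n-\frac{n+5}{3}=\frac{2n-5}{3}$, using $|H_2|\geq\delta+1$. So
\[
\delta(H_1)\geq \tfrac{n+2}{3}\geq \tfrac{|H_1|}{2}+\text{(positive slack)},
\]
because $\frac{n+2}{3}\geq\frac{1}{2}\cdot\frac{2n-5}{3}$. By Dirac's theorem $H_1$ is Hamiltonian, so the circumference of $G$ is at least $|H_1|\geq\frac{n-1}{2}$.

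\emph{Step 3: $G$ connected.} This is the main case. Here I would invoke the variation of Dirac's theorem alluded to in the text: every connected graph on $n$ vertices with minimum degree $\delta$ has a cycle of length at least $\min\{n,2\delta\}$ (Dirac), or a path of length at least $\min\{n-1,2\delta\}$ (Erdős–Gallai type). Using the cycle version gives a circumference of at least $\min\{n,\frac{2(n+2)}{3}\}\geq\frac{n-1}{2}$, which is comfortably enough.

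Dirac's original 1952 paper in fact contains the statement that a 2-connected graph has circumference at least $\min\{n,2\delta\}$. If the graph is connected but not 2-connected, I would treat it via its block structure. Take an end block $B$. Each non-cut vertex of $B$ has all its neighbors inside $B$, so $|B|\geq\delta+1$. If $B$ is 2-connected, apply Dirac's bound inside $B$. For $B$ itself, the degree condition holds for all vertices except the cut vertex, and the standard longest-path argument still yields a cycle of length at least $\min\{|B|,2\delta-1\}$ or so. Either value exceeds $\frac{n-1}{2}$ for $\delta\geq\frac{n+2}{3}$, since $|B|\geq\frac{n+5}{3}$ and $2\delta-1\geq\frac{2n+1}{3}$.

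The main obstacle is exactly this non-2-connected case: justifying the cycle bound inside an end block where one vertex (the cut vertex) may have small degree. I would first try the clean route: take a longest path in $B$ that starts at a non-cut vertex and avoids ending at the cut vertex. Then apply Pósa/Dirac rotation to get a cycle of length at least $\delta$. Even this crude bound $\delta\geq\frac{n+2}{3}$ falls short of $\frac{n-1}{2}$ only for large $n$, so the stronger $2\delta-1$ estimate is needed. If that gets messy, an alternative is to count: two end blocks each have at least $\delta+1>\frac{n}{3}$ private vertices. So there are at most two blocks, and with just two blocks one of them has at least about $n/2$ vertices and near-Dirac minimum degree, making it Hamiltonian by Dirac's theorem as in Step 2.
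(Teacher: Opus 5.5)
Your Steps 1 and 2 are correct, and in the 2-connected case the paper also uses Dirac's $\min\{n,2\delta\}$ bound. The first claim of Step 3, however, is false for graphs that are only connected: two copies of $K_{\delta+1}$ sharing a vertex have circumference $\delta+1<\min\{n,2\delta\}$. So everything rests on the connected, non-2-connected case, and there the argument fails.

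You take an \emph{arbitrary} end block $B$ and assert that $|B|\geq\frac{n+5}{3}$ exceeds $\frac{n-1}{2}$. That inequality holds only for $n\leq 13$. For example, glue $K_{12}$ and $K_{18}$ at one vertex. Then $n=29$ and $\delta=11\geq\frac{31}{3}$, but the end block $K_{12}$ has circumference $12<14=\frac{n-1}{2}$. So even if you grant your unproved estimate $\min\{|B|,2\delta-1\}$ for a block with one low-degree vertex, the end block can be the small side, of order about $n/3$, and gives nothing. You have to work on the large side.

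Your fallback heads that way but is not right as stated. There can be three blocks: two end blocks joined by a bridge. Also, the larger block need not satisfy Dirac's condition, because its cut vertex may have only two neighbours inside it, so the cut vertex must be deleted first.

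The paper avoids blocks entirely:
\begin{itemize}
\item If $G$ is not 2-connected, choose $v$ with $G-v$ disconnected. This also covers disconnected $G$, so your Step 2 becomes unnecessary.
\item Split $G-v$ into two unions of components $G_1,G_2$ with $|V(G_1)|\leq|V(G_2)|$.
\item Any $u\in V(G_1)$ has all its neighbours in $(V(G_1)\setminus\{u\})\cup\{v\}$. Hence $|V(G_1)|\geq\delta\geq\frac{n+2}{3}$, and so $|V(G_2)|\leq n-1-\delta\leq\frac{2n-5}{3}$.
\item Meanwhile $\delta(G_2)\geq\delta-1\geq\frac{n-1}{3}>\frac{1}{2}|V(G_2)|$.
\item By Dirac's theorem $G_2$ is Hamiltonian, which gives a cycle of length $|V(G_2)|\geq\frac{n-1}{2}$.
\end{itemize}
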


\begin{proof}
    When $G$ is 2-connected, it is well known that $G$ contains a cycle of length at least $2\delta(G)\geq \frac{2(n+2)}{3}>\frac{n-1}{2}$, as desired.
    Thus, we assume that $G$ has a vertex $v$ such that $G-v$ is disconnected. 
    Note that it is possible that $G$ itself is disconnected.
    Let $G_1$ be a component of $G-v$ and let $G_2=G-\{v\}\cup V(G_1)$.
    Without loss of generality, we may assume that $|V(G_1)|\leq |V(G_2)|$.
    Considering a vertex $u\in V(G_1)$, we have $\frac{n+2}{3}\leq d_G(u)\leq |\{v\}\cup (V(G_1)\setminus \{u\})|=|V(G_1)|$, and hence $|V(G_2)|=n-1-|V(G_1)|\leq n-1-\frac{n+2}{3}=\frac{2n-5}{3}$.
    Since $\delta(G_2)\geq \delta(G)-1\geq \frac{n-1}{3}>\frac{1}{2}|V(G_2)|$, by Dirac's theorem, $G_2$ is Hamiltonian.
    This implies that $G$ contains a cycle of length at least $|V(G_2)|\geq \frac{n-1}{2}$.
\end{proof}

Now we prove the upper bound of $\mathrm{R}(W_n,W_n)$, as follows.

\begin{proposition}\label{prop:two_colors_upper_bound}
    For all integers $n\geq 7$,
    \[
    \mathrm{R}(W_n, W_n) \leq
    \begin{cases}
        6n-6 & \text{if } n \text{ is even}, \\
        \frac{9n-7}{2} & \text{if } n \text{ is odd}.
    \end{cases}
    \]
\end{proposition}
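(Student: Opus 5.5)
Let $N$ be the claimed bound ($6n-6$ for even $n$, $\frac{9n-7}{2}$ for odd $n$), and suppose $K_N$ is red/blue colored with no monochromatic $W_n$. The plan is to find a vertex $v$ and a color, say red, such that $S=N_{G_r}(v)$ is large, and then show that $G_r[S]$ contains a $C_{n-1}$; this gives a red $W_n$ centered at $v$. Every vertex has degree $N-1$, split between the two colors. So some vertex $v$ has red (say) degree $|S|\geq \lceil (N-1)/2\rceil$, which is about $3n-4$ for even $n$ and $\frac{9n-9}{4}$ for odd $n$. Inside $S$ there is no red $C_{n-1}$. For $u\in S$, the blue neighbourhood of $u$ inside $S$ also cannot contain a blue $C_{n-1}$, otherwise we get a blue wheel centered at $u$.

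First, apply Theorem~\ref{thm:cycle_ramsey} to $G[S]$. For even $n$, $k=n-1$ is odd and $\mathrm{R}(C_{n-1},C_{n-1})=2n-3\leq |S|$. For odd $n$, $k=n-1$ is even and $\mathrm{R}=\frac{3(n-1)}{2}-1\leq |S|$. Since there is no red $C_{n-1}$ in $S$, there is a blue $C_{n-1}$ in $S$. This alone is not enough, because the blue cycle needs a common blue neighbour to become a wheel. So the real argument has to control degrees inside $S$.

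The plan is to iterate a degree-deletion argument. Let $H=G_b[S]$. If some $u\in S$ has blue degree in $S$ at least $\mathrm{R}(C_{n-1},C_{n-1})$, then its blue neighbourhood in $S$ contains a red or blue $C_{n-1}$. A red one lies in $S$, contradiction. A blue one, together with $u$, is a blue $W_n$, contradiction. Hence $\Delta(H)<\mathrm{R}(C_{n-1},C_{n-1})$, so $G_r[S]$ has minimum degree at least $|S|-\mathrm{R}(C_{n-1},C_{n-1})$.
\begin{itemize}
\item For even $n$ this gives $\delta(G_r[S])\geq (3n-4)-(2n-4)=n$ roughly, with $|S|\approx 3n-4$. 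That satisfies $\delta\geq \frac{|S|+2}{3}$.
\item For odd $n$ it gives $\delta\geq \frac{9n-9}{4}-\frac{3n-5}{2}=\frac{3n+1}{4}$ roughly, again about $|S|/3$. The constants in $N$ are presumably chosen exactly to make $\delta(G_r[S])\geq\frac{|S|+2}{3}$.
\end{itemize}

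Then apply Theorem~\ref{thm:weakly_pancyclic} and Lemma~\ref{lem:circumference} to $G_r[S]$. Lemma~\ref{lem:circumference} gives circumference at least $\frac{|S|-1}{2}\geq n-1$ in both cases. If $G_r[S]$ is non-bipartite, it is weakly pancyclic with girth $3$ or $4$, so it contains a red $C_{n-1}$, a contradiction. If $G_r[S]$ is bipartite, its parts $X$ and $Y$ are blue cliques; one of them has size at least $|S|/2\geq n$, giving a blue $K_n\supseteq W_n$, a contradiction.

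The main obstacle I expect is the arithmetic in the degree step. It has to be arranged so that $\delta\geq\frac{|S|+2}{3}$ holds with the exact constants, using the right parity of $N-1$ and possibly choosing $S$ of a specific size. A secondary issue is that the circumference bound may not exceed $n-1$ when Lemma~\ref{lem:circumference} falls into its cut-vertex case. To handle that I would first check that $\delta$ is large relative to $n-1$, and otherwise trim $S$ to a subset of suitable size before applying the lemmas.
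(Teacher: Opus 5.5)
Your proposal is correct and is essentially the paper's own proof. You take the majority-colour neighbourhood $S$ of a vertex, use $\mathrm{R}(C_{n-1},C_{n-1})$ to bound blue degrees inside $S$ so that $\delta(G_r[S])\ge |S|-\mathrm{R}(C_{n-1},C_{n-1})\ge\frac{|S|+2}{3}$, and then finish with Theorem~\ref{thm:weakly_pancyclic} and Lemma~\ref{lem:circumference} in the non-bipartite case, or with a blue clique of order $\lceil |S|/2\rceil\ge n$ in the bipartite case. Do tidy the arithmetic, though: for even $n$ one has $|S|\ge\lceil\frac{6n-7}{2}\rceil=3n-3$ and $\mathrm{R}(C_{n-1},C_{n-1})=2n-3$, and $|S|-(2n-3)\ge\frac{|S|+2}{3}$ holds exactly when $|S|\ge 3n-\frac72$, so your ``$|S|\approx 3n-4$'' would actually fail and the ceiling is essential. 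Your cut-vertex worry is unfounded, since Lemma~\ref{lem:circumference} already gives circumference at least $\frac{|S|-1}{2}\ge n-1$ in all cases.
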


\begin{proof}
    Let $G$ be a complete graph of order $N$ with any red-blue coloring of the edges.
    We fix a vertex $v_0$ of $G$.
    Since $|N_{G_r}(v_0)|+|N_{G_b}(v_0)|=N-1$, without loss of generality, we may assume that $|N_{G_r}(v_0)|\geq \left\lceil\frac{N-1}{2}\right\rceil$.
    We set $S:=N_{G_r}(v_0)$ and $s:=|S|$.
    If there is a vertex $v$ of $S$ such that $|N_{G_b}(v)\cap S|\geq \mathrm{R}(C_{n-1},C_{n-1})$, then $G[N_{G_b}(v)\cap S]$ contains either a red $C_{n-1}$ or a blue $C_{n-1}$, which together with $v_0$ or $v$ forms a red $W_n$ or a blue $W_n$, respectively.
    Thus, we may assume that $|N_{G_b}(v)\cap S|\leq \mathrm{R}(C_{n-1},C_{n-1})-1$, and hence $d_{G_r}(v)\geq s-1-|N_{G_b}(v)\cap S|\geq s-\mathrm{R}(C_{n-1},C_{n-1})$.
    As the choice of $v\in S$ is arbitrary, we conclude that $\delta(G_r[S])\geq s-\mathrm{R}(C_{n-1},C_{n-1})$.

    We divide the argument according to the parity of $n$, and show that $G_r[S]$ contains a cycle of length $n-1$.

    \setcounter{case}{0}
    \begin{case}
        $n$ is even.
    \end{case}

    Let $N=6n-6$ and we shall show that $G$ contains either a red $W_n$ or a blue $W_n$.
    We have $s\geq \left\lceil\frac{N-1}{2}\right\rceil\geq \left\lceil\frac{6n-7}{2}\right\rceil=3n-3$.
    By Theorem~\ref{thm:cycle_ramsey}, we have $\mathrm{R}(C_{n-1},C_{n-1})=2(n-1)-1=2n-3$, and hence $\delta(G_r[S])\geq s-2n+3$.
    Since $s-2n+3\geq \frac{s+2}{3}$ holds for $s\geq 3n-\frac{7}{2}$, we have
    \[\delta(G_r[S])\geq s-2n+3\geq \frac{s+2}{3}=\frac{|S|+2}{3}.\]
    
    Suppose that $G_r[S]$ is a bipartite graph with parts $S_1$ and $S_2$.
    Then one of them, say $S_1$, has order at least $\frac{s}{2}\geq \frac{3n-3}{2}$.
    Since $G_b[S_1]$ is a complete graph of order at least $\frac{3n-3}{2}\geq n$ for $n\geq 3$, $G$ contains a blue $W_n$.

    Thus, we may assume that $G$ is non-bipartite.
    By Theorem~\ref{thm:weakly_pancyclic} and Lemma~\ref{lem:circumference}, $G_r[S]$ is weakly pancyclic and has girth at most $4\leq n-1$ and circumference at least $\frac{s-1}{2}\geq \frac{3n-4}{2}\geq n-1$.
    Thus, $G_r[S]$ contains $C_{n-1}$, which together with $v_0$ yields a red $W_n$ in $G$.

    \begin{case}
        $n$ is odd.
    \end{case}

    Let $N=\frac{9n-7}{2}$ and we shall show that $G$ contains either a red $W_n$ or a blue $W_n$.
    Using a similar argument as in Case 1, by Theorem~\ref{thm:cycle_ramsey}, we have $s\geq \left\lceil\frac{N-1}{2}\right\rceil\geq \left\lceil\frac{9n-9}{4}\right\rceil\geq \frac{9n-9}{4}$ and 
    $\delta(G_r[S])\geq s-\mathrm{R}(C_{n-1},C_{n-1})= s-\frac{3n-5}{2}$.
    Since $s-\frac{3n-5}{2}\geq \frac{s+2}{3}$ holds for $s\geq \frac{9n-11}{4}$, we have
    \[\delta(G_r[S])\geq s-\frac{3n-5}{2}\geq \frac{s+2}{3}=\frac{|S|+2}{3}.\]
    
    If $G_r[S]$ is bipartite, then as in Case 1, we conclude that $G_b[S]$ contains a complete graph of order at least $\frac{s}{2}\geq \frac{9n-7}{8}\geq n$ for $n\geq 7$, and hence $G$ contains a blue $W_n$.
    Otherwise, by Theorem~\ref{thm:weakly_pancyclic} and Lemma~\ref{lem:circumference}, $G_r[S]$ is weakly pancyclic and has girth at most $4\leq n-1$ and circumference at least $\frac{s-1}{2}\geq \frac{9n-13}{8}\geq n-1$.
    Thus, $G_r[S]$ contains $C_{n-1}$, which together with $v_0$ forms a red $W_n$ in $G$.

    Combining Cases 1 and 2, we conclude that $\mathrm{R}(W_n,W_n)\leq 6n-6$ for even $n$ and $\mathrm{R}(W_n,W_n)\leq \frac{9n-7}{2}$ for odd $n$.
\end{proof}

\section{More colors}\label{section:many_colors}

In this section, we give proofs of Theorems~\ref{thm:k_ramsey_lower} and \ref{thm:k_ramsey_upper}.

\subsection{Lower bound}

\begin{proof}[Proof of Theorem~\ref{thm:k_ramsey_lower}]
    Let $k$ and $\ell$ be positive integers with $k\geq 2$ and $1\leq \ell\leq k-1$.
    The argument is almost same despite the parity of $n$, so we only show the case $n$ is even. (In fact, the case $n$ is odd is simpler.)

    Suppose that $n$ is even.
    Set $s:=\mathrm{R}_{\ell}(K_4^-)-1$, $t:=\mathrm{R}_{k-\ell}(W_n)-1$, and $N:=st$.
    Let $G$ be a graph isomorphic to $K_N$, and we shall construct an edge coloring of $G$ with $k$ colors without monochromatic $W_n$.
    By the definition of $s$, the complete graph $K_s$ with vertices $\{v_1,v_2,\ldots ,v_s\}$ has an edge coloring $\chi'$ with colors $\{1,\ldots ,\ell\}$ without monochromatic $K_4^-$. 
    Let $V_1\cup V_2 \cup \cdots \cup V_s$ be a partition of the vertices of $K_N$ such that $|V_i| = t$ for any $i \in  \{1,\ldots ,s\}$. 
    We define an edge coloring $\chi$ of $G$ as follows:
    \begin{itemize}
        \item For any $i \in \{1, \ldots, s\}$, as $|V_i| < \mathrm{R}_{k-\ell}(W_n)$, we color the edges of $G[V_i]$ in colors $\{\ell + 1, \ldots, k\}$ so that no monochromatic $W_n$ exists.
        \item For any distinct $i, j \in \{1, \ldots, s\}$ and any $x\in V_i$, $y\in V_j$, we let $\chi(xy) = \chi'(v_iv_j)$.
    \end{itemize}
    By the definition, it is easy to see that $G$ has no monochromatic $W_n$ of a color in $\{1, \ldots , s\}$.
    Suppose that $\{v\}\cup W$ forms a monochromatic $W_n$ with the center $v$. 
    Let $c$ be the color of the monochromatic $W_n$, and let $G_c$ be the graph whose vertex set is $V(G)$ and edge set is all the edges of color $c$.
    As $n$ is even, $G_c[W]$ contains an odd cycle.
    On the other hand, since $\chi'$ is an edge coloring without monochromatic $K_4^-$, $G_c[N_{G_c}(v)]$ is a bipartite graph containing $W$, a contradiction.
    Thus, no monochromatic $W_n$ exists with respect to $\chi$, and hence $\mathrm{R}_k(W_n)\geq N+1=st+1=(\mathrm{R}_{\ell}(K_4^-)-1)\cdot (\mathrm{R}_{k-\ell}(W_n)-1)$, as desired.
    
    We remark that when $n$ is odd, we set $\chi'$ as an edge coloring of $K_{\mathrm{R}_{\ell}(K_3)-1}$ without monochromatic triangle, and then $\chi$ has no monochromatic triangle of a color in $\{1,\dots ,\ell\}$ neither.
\end{proof}

\subsection{Upper bound}

The following result on the Ramsey number for a tree and a complete graph by Chv\'{a}tal~\cite{chvatal1977tree} is well-known.

\begin{lemma}[\cite{chvatal1977tree}]\label{lem:tree_complete_ramsey}
    For any tree $T_s$ with $s$ vertices, we have 
    \[\mathrm{R}(T_s,K_t)=(s-1)(t-1)+1.\]
\end{lemma}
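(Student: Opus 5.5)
This is Chvátal's classical result, so the paper cites it rather than proving it. A self-contained proof would establish the two inequalities separately.

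\emph{Lower bound.} Take $t-1$ disjoint copies of $K_{s-1}$ and color all edges inside a copy red and all edges between copies blue. Every red component has only $s-1$ vertices, so the red graph contains no tree on $s$ vertices. The blue graph is complete $(t-1)$-partite, so its clique number is $t-1$ and there is no blue $K_t$. This gives $\mathrm{R}(T_s,K_t)\geq (s-1)(t-1)+1$.

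\emph{Upper bound.} We induct on $t$; the case $t=1$ is trivial. Color $K_N$ with $N=(s-1)(t-1)+1$ and suppose there is no red $T_s$. We show there is a blue $K_t$.
\begin{itemize}
\item First, some vertex $v$ has blue degree at least $(s-1)(t-2)+1$. Otherwise every vertex has red degree at least $N-1-(s-1)(t-2)=s-1$.
\item A graph of minimum degree at least $s-1$ contains every tree on $s$ vertices. To see this, embed the tree greedily, adding leaves one at a time in a BFS order. When we attach a new vertex to an already placed vertex $u$, at most $s-2$ vertices are occupied, while $u$ has at least $s-1$ neighbours. So a free neighbour always exists.
\item This greedy embedding would give a red $T_s$, a contradiction. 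Hence $v$ exists.
\item The blue neighbourhood of $v$ has at least $(s-1)(t-2)+1$ vertices. By induction it contains a red $T_s$ or a blue $K_{t-1}$. In the second case, adding $v$ gives a blue $K_t$.
\end{itemize}

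The only nontrivial step is the greedy tree embedding under the minimum degree condition, and it is routine.
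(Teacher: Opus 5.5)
Your proof is correct. The paper does not prove this lemma; it only cites Chv\'atal, so there is no argument in the paper to compare against. Your argument is the standard self-contained proof: the $t-1$ disjoint red copies of $K_{s-1}$ give the lower bound, and the upper bound comes from induction on $t$ through a vertex of blue degree at least $(s-1)(t-2)+1$, with greedy tree embedding when the red minimum degree is at least $s-1$.

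One small wording fix is needed in the embedding step. When a new tree vertex is attached to $u$, up to $s-1$ vertices may already be occupied, not $s-2$. One of these is $u$ itself, so at most $s-2$ of the at least $s-1$ red neighbours of $u$ are occupied, and a free neighbour still exists.
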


Using Lemma~\ref{lem:tree_complete_ramsey}, we show the following bound of $\mathrm{R}(C_{n-1},K_m)$, which will be used in our proof of Theorem~\ref{thm:k_ramsey_upper}.

\begin{lemma}\label{lem:cycle_complete_ramsey}
    For integers $n\geq 4$ and $m\geq 2$,
    \[
    \mathrm{R}(C_{n-1}, K_m) \leq m + \frac{n - 3}{2} m(m - 1).
    \]
\end{lemma}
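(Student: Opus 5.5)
We argue by induction on $m$, writing $f(m):=m+\frac{n-3}{2}m(m-1)$; this is an integer because $m(m-1)$ is even. The plan is a neighbourhood argument. If a vertex has large blue degree, we apply induction inside its blue neighbourhood. Otherwise its red neighbourhood is large, and Lemma~\ref{lem:tree_complete_ramsey}, applied with the tree a path, produces a red path that closes into a red $C_{n-1}$ through the vertex.

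\emph{Base case $m=2$.} Here $f(2)=2+(n-3)=n-1$. Any red-blue coloring of $K_{n-1}$ either has a blue edge, which is a blue $K_2$, or is entirely red and so contains a red $C_{n-1}$ since $n-1\geq 3$. Hence $\mathrm{R}(C_{n-1},K_2)\leq n-1=f(2)$.

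\emph{Inductive step.} Let $m\geq 3$ and assume $\mathrm{R}(C_{n-1},K_{m-1})\leq f(m-1)$. Take any red-blue coloring of $K_N$ with $N=f(m)$, and suppose it has neither a red $C_{n-1}$ nor a blue $K_m$. Fix a vertex $v$.
\begin{itemize}
\item If $|N_{G_b}(v)|\geq f(m-1)$, the induction hypothesis applied to $G[N_{G_b}(v)]$ gives either a red $C_{n-1}$ or a blue $K_{m-1}$. Adding $v$ to the blue $K_{m-1}$ gives a blue $K_m$. Either way this is a contradiction.
\item Otherwise $|N_{G_b}(v)|\leq f(m-1)-1$, so
\[
|N_{G_r}(v)|\geq N-1-(f(m-1)-1)=f(m)-f(m-1)=1+(n-3)(m-1).
\]
\end{itemize}

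In the second case, Lemma~\ref{lem:tree_complete_ramsey} with $T$ the path $P_{n-2}$ on $n-2$ vertices gives $\mathrm{R}(P_{n-2},K_m)=(n-3)(m-1)+1$. This is exactly the lower bound on $|N_{G_r}(v)|$. So $G[N_{G_r}(v)]$ contains either a blue $K_m$, a contradiction, or a red path $x_1x_2\cdots x_{n-2}$. In the latter case $v$ is red-adjacent to both endpoints $x_1$ and $x_{n-2}$, so $vx_1x_2\cdots x_{n-2}v$ is a red cycle on $n-1$ vertices. This is again a contradiction, which completes the induction.

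The main step to check is the arithmetic identity
\[
f(m)-f(m-1)=1+\tfrac{n-3}{2}\bigl(m(m-1)-(m-1)(m-2)\bigr)=1+(n-3)(m-1),
\]
which makes the red neighbourhood exactly large enough for Chvátal's bound. One must also use $n\geq 4$, so that $P_{n-2}$ has at least two vertices and the closed walk is a genuine cycle of length $n-1\geq 3$.
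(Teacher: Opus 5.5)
Your proof is correct and follows the same route as the paper. Both argue by induction on $m$ and split a fixed vertex's neighbourhood into a blue part, handled by the induction hypothesis, and a red part, handled by Chvátal's bound $\mathrm{R}(P_{n-2},K_m)=(n-3)(m-1)+1$. The only cosmetic difference is that the paper first records the recursion $\mathrm{R}(C_{n-1},K_m)\le \mathrm{R}(C_{n-1},K_{m-1})+\mathrm{R}(P_{n-2},K_m)$ and then substitutes, whereas you build the thresholds $f(m-1)$ and $f(m)-f(m-1)$ directly into the argument.
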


\begin{proof}
    The proof goes by induction on $m$.
    For $m=2$, we have $\mathrm{R}(C_{n-1},K_2)=n-1=2+\frac{n-3}{2}\cdot 2$, hence the statement holds.

    Assume now that $m \geq 3$ and that the statement holds for $m-1$.
    Let $N = \mathrm{R}(P_{n-2}, K_m) + \mathrm{R}(C_{n-1}, K_{m-1})$.
    Let $G$ be a complete graph of order $N$, and consider any red-blue coloring of the edges of $G$.
    We shall show that $G$ contains either a red $C_{n-1}$ or a blue $K_m$.
    Recall that $G_r$ and $G_b$ denote the graph induced by red edges and blue edges, respectively.

    We fix a vertex $v$ of $G$.
    Since $d_G(v)=\mathrm{R}(P_{n-2}, K_m) + \mathrm{R}(C_{n-1}, K_{m-1})-1$, either $|N_{G_r}(v)|\geq \mathrm{R}(P_{n-2}, K_m)$ or $|N_{G_b}(v)|\geq \mathrm{R}(C_{n-1}, K_{m-1})$.
    If $|N_{G_r}(v)|\geq \mathrm{R}(P_{n-2}, K_m)$, then either $G[N_{G_r}(v)]$ contains a blue $K_m$ or a red $P_{n-2}$, which together with $v$ forms a red $C_{n-1}$ in $G$.
    If $|N_{G_b}(v)|\geq \mathrm{R}(C_{n-1}, K_{m-1})$, then either $G[N_{G_b}(v)]$ contains a red $C_{n-1}$ or a blue $K_{m-1}$, which together with $v$ forms a blue $K_m$ in $G$.
    Thus, we have 
    \[
    \mathrm{R}(C_{n-1}, K_m) \leq \mathrm{R}(C_{n-1}, K_{m-1}) + \mathrm{R}(P_{n-2}, K_m).
    \]
    
    By the induction hypothesis and Lemma~\ref{lem:tree_complete_ramsey},
    \begin{align*}
        \mathrm{R}(C_{n-1}, K_m) &\leq \left( (m - 1) + \frac{n - 3}{2} (m - 1)(m - 2) \right) + ((n - 3)(m - 1) + 1) \\
        &= m + \frac{n - 3}{2} m(m - 1),
    \end{align*}
    which completes the proof.
\end{proof}

Now we prove Theorem~\ref{thm:k_ramsey_upper}.

\begin{proof}[Proof of Theorem~\ref{thm:k_ramsey_upper}]
    Let $s:=\mathrm{R}(C_{n-1},\overbrace{W_n, \ldots , W_n}^{k-1})-1$ and let $N_1=ks+2$.
    Consider any $k$-coloring of the edges of a complete graph $G$ with $N_1$ vertices.
    For a vertex $v$, since $d_G(v)=N_1-1=ks+1$, there exists a color $c\in \{1,\ldots ,k\}$ such that $|N_{G_c}(v)|\geq s+1=\mathrm{R}(C_{n-1},\overbrace{W_n, \ldots , W_n}^{k-1})$, where $G_c$ is a spanning subgraph of $G$ induced by all the edges of color $c$.
    Then, $G[N_{G_c}(v)]$ contains either a monochromatic $W_n$ or a monochromatic $C_{n-1}$ of color $c$, which together with $v$ forms a monochromatic $W_n$ in $G$.
    Thus, we have
    \[
    \mathrm{R}_k(W_n)\leq N_1=k\bigl(\mathrm{R}(C_{n-1},\overbrace{W_n, \ldots , W_n}^{k-1})-1\bigr)+2.
    \]

    Next, we show the second inequality in Theorem~\ref{thm:k_ramsey_upper}.
    Set $t:=\mathrm{R}_{k-1}(W_n)$ and $N_2:=\mathrm{R}(C_{n-1}, K_t)$.
    Consider any $k$-coloring of edges of a complete graph $H$ with $N_2$ vertices.
    Mapping the colors 1 to red and $\{2,\ldots ,k\}$ to blue, we obtain a red-blue coloring of the edges of $H$.
    By the definition of $N_2$, either a red $C_{n-1}$ or a blue $K_t$ exists.
    In the former case, $H$ contains a monochromatic $C_{n-1}$ of color 1.
    In the latter case, a set $S\subseteq V(H)$ induces a blue $K_t$, and it forces that there is a monochromatic $W_n$ of a color in $\{2,\ldots ,k\}$ in $H[S]\subseteq H$.
    Thus, we have
    \[
    \mathrm{R}(C_{n-1},\overbrace{W_n, \ldots , W_n}^{k-1})\leq N_2=\mathrm{R}(C_{n-1},K_{\mathrm{R}_{k-1}(W_n)}),
    \]
    and hence
    \[
    \mathrm{R}_k(W_n)\leq k\bigl(\mathrm{R}(C_{n-1},\overbrace{W_n, \ldots , W_n}^{k-1})-1\bigr)+2
    \leq k\bigl(\mathrm{R}(C_{n-1},K_{\mathrm{R}_{k-1}(W_n)})-1\bigr)+2.
    \]

    Finally, by applying Lemma~\ref{lem:cycle_complete_ramsey} to $\mathrm{R}(C_{n-1},K_{\mathrm{R}_{k-1}(W_n)})$, we infer that
    \begin{align*}
        \mathrm{R}_k(W_n) &\leq k\bigl(\mathrm{R}(C_{n-1},K_{\mathrm{R}_{k-1}(W_n)})-1\bigr)+2 \\
        &\leq k\left(\frac{n-3}{2}\mathrm{R}_{k-1}(W_n)\cdot (\mathrm{R}_{k-1}(W_n)-1)+\mathrm{R}_{k-1}(W_n)-1\right)+2.
    \end{align*}
    This completes the proof of Theorem~\ref{thm:k_ramsey_upper}.
\end{proof}

\section*{Acknowledgement}
Masaki Kashima is supported by JSPS KAKENHI grant number: JP25KJ2077. 
Yaping Mao is supported by the National Science Foundation of China grant number: 12061059.

\bibliographystyle{abbrv}
\bibliography{reference}

@book{bondy2008graph,
  title={Graph theory},
  author={Bondy, John Adrian and Murty, Uppaluri Siva Ramachandra},
  year={2008},
  publisher={Springer Publishing Company, Incorporated}
}

@article{brandt1998weakly,
  title={Weakly pancyclic graphs},
  author={Brandt, Stephan and Faudree, Ralph and Goddard, Wayne},
  journal={Journal of Graph Theory},
  volume={27},
  number={3},
  pages={141--176},
  year={1998},
  publisher={Wiley Online Library}
}

@article{burr1983generalizations,
  title={Generalizations of a {R}amsey-theoretic result of {C}hv{\'a}tal},
  author={Burr, Stefan A and Erd{\H{o}}s, Paul},
  journal={Journal of Graph Theory},
  volume={7},
  number={1},
  pages={39--51},
  year={1983},
  publisher={Wiley Online Library}
}

@article{chen2005ramsey,
  title={The {R}amsey numbers of paths versus wheels},
  author={Chen, Yaojun and Zhang, Yunqing and Zhang, Kemin},
  journal={Discrete Mathematics},
  volume={290},
  number={1},
  pages={85--87},
  year={2005},
  publisher={Elsevier}
}

@article{chvatal1977tree,
  title={Tree-complete graph {R}amsey numbers},
  author={Chv{\'a}tal, Vasek},
  journal={Journal of Graph Theory},
  volume={1},
  number={1},
  pages={93--93},
  year={1977},
  publisher={Wiley Online Library}
}

@article{dirac1952some,
  title={Some theorems on abstract graphs},
  author={Dirac, Gabriel Andrew},
  journal={Proceedings of the London Mathematical Society},
  volume={3},
  number={1},
  pages={69--81},
  year={1952},
  publisher={Oxford University Press}
}

@article{erdos1978cycle,
  title={On cycle-Complete graph ramsey numbers},
  author={Erd{\H{o}}s, Paul and Faudree, Ralph J and Rousseau, Cecil C and Schelp, Richard H},
  journal={Journal of Graph Theory},
  volume={2},
  number={1},
  pages={53--64},
  year={1978},
  publisher={Wiley Online Library}
}

@article{faudree1974all,
  title={All {R}amsey numbers for cycles in graphs},
  author={Faudree, Ralph J and Schelp, Richard H},
  journal={Discrete Mathematics},
  volume={8},
  number={4},
  pages={313--329},
  year={1974},
  publisher={Elsevier}
}

@article{faudree1993conjecture,
  title={A Conjecture of {E}rd{\H{o}}s and the {R}amsey Number ${R}({W}_6)$},
  author={Faudree, Ralph J and McKay, BD},
  journal={Journal of Combinatorial Mathematics and Combinatorial Computing},
  volume={13},
  pages={23--31},
  year={1993}
}

@book{graham1991ramsey,
  title={{R}amsey theory},
  author={Graham, Ronald L and Rothschild, Bruce L and Spencer, Joel H},
  year={1991},
  publisher={John Wiley \& Sons}
}

@article{greenwood1955combinatorial,
  title={Combinatorial relations and chromatic graphs},
  author={Greenwood, Robert E and Gleason, Andrew Mattei},
  journal={Canadian Journal of Mathematics},
  volume={7},
  pages={1--7},
  year={1955},
  publisher={Cambridge University Press}
}

@article{harborth1988all,
  title={All {R}amsey numbers for five vertices and seven or eight edges},
  author={Harborth, Heiko and Mengersen, Ingrid},
  journal={Discrete Mathematics},
  volume={73},
  number={1-2},
  pages={91--98},
  year={1988},
  publisher={Elsevier}
}

@article{hendry1989ramsey,
  title={{R}amsey numbers for graphs with five vertices},
  author={Hendry, George RT},
  journal={Journal of Graph Theory},
  volume={13},
  number={2},
  pages={245--248},
  year={1989},
  publisher={Wiley Online Library}
}

@article{karolyi2001generalized,
  title={Generalized and geometric {R}amsey numbers for cycles},
  author={K{\'a}rolyi, Gyula and Rosta, Vera},
  journal={Theoretical Computer Science},
  volume={263},
  number={1-2},
  pages={87--98},
  year={2001},
  publisher={Elsevier}
}

@article{mao2022ramsey,
  title={{R}amsey and {G}allai-{R}amsey number for wheels},
  author={Mao, Yaping and Wang, Zhao and Magnant, Colton and Schiermeyer, Ingo},
  journal={Graphs and Combinatorics},
  volume={38},
  number={2},
  pages={42},
  year={2022},
  publisher={Springer}
}

@article{radziszowski1994paths,
  title={Paths, cycles and wheels in graphs without antitriangles},
  author={Radziszowski, Stanislaw and Jin, Xia},
  journal={Centre for Discrete Mathematics and Computing: Australasian Journal of Combinatorics},
  volume={9},
  year={1994}
}

@article{rosta1973ramsey,
  title={On a {R}amsey-type problem of {J}. {A}. {B}ondy and {P}. {E}rd{\H{o}}s. II},
  author={Rosta, Vera},
  journal={Journal of Combinatorial Theory, Series B},
  volume={15},
  number={1},
  pages={105--120},
  year={1973},
  publisher={Elsevier}
}

@article{salman2007ramsey,
  title={On {R}amsey numbers for paths versus wheels},
  author={Salman, ANM and Broersma, Haitze J},
  journal={Discrete mathematics},
  volume={307},
  number={7-8},
  pages={975--982},
  year={2007},
  publisher={Elsevier}
}

@article{shi2010ramsey,
  title={{R}amsey numbers of long cycles versus books or wheels},
  author={Shi, Lingsheng},
  journal={European Journal of Combinatorics},
  volume={31},
  number={3},
  pages={828--838},
  year={2010},
  publisher={Elsevier}
}

@article{zhang2010ramsey,
  title={The {R}amsey numbers for cycles versus wheels of even order},
  author={Zhang, Lianmin and Chen, Yaojun and Cheng, TC Edwin},
  journal={European Journal of Combinatorics},
  volume={31},
  number={1},
  pages={254--259},
  year={2010},
  publisher={Elsevier}
}

\end{document}